\def\sideremark#1{\ifvmode\leavevmode\fi\vadjust{\vbox to0pt{\vss
      \hbox to 0pt{\hskip\hsize\hskip1em
        \vbox{\hsize3cm\tiny\raggedright\pretolerance10000%
          \noindent #1\hfill}\hss}\vbox to8pt{\vfil}\vss}}}%
\newtheorem{Thm}{Theorem}{\bfseries}{\itshape}
\newtheorem*{Thm*}{Theorem}{\bfseries}{\itshape}
\newtheorem{Cor}{Corollary}{\bfseries}{\itshape}
\newtheorem{Prop}[Cor]{Proposition}{\bfseries}{\itshape}
\newtheorem{Lem}[Cor]{Lemma}{\bfseries}{\itshape}
\newtheorem*{Lem*}{Lemma}{\bfseries}{\itshape}
\newtheorem{Fact}[Cor]{Fact}{\bfseries}{\itshape}
{\bfseries}{\itshape}
\newtheorem{Def}[Cor]{Definition}{\bfseries}{\rmfamily}
{\scshape}{\rmfamily}
\newtheorem{Rem}[Cor]{Remark}{\scshape}{\rmfamily}
{\bfseries}{\itshape}
\renewcommand\ge{\geqslant} \renewcommand\le{\leqslant}
\let\tildeaccent=\~ \let\hataccent=\^
\renewcommand\~[1]{\widetilde{#1}}
\def\<{\left<} \def\>{\right>} \def\({\left(} \def\){\right)}
\let\parasymbol=\S \def\secref#1{\parasymbol\ref{#1}}
\let\polishL=l \def\Zoladek.{\.Zol\c adek}
 \def\const{\operatorname{const}}
 \def\etc.{\emph{etc}.}
\def\R{{\mathbb R}}  
\def\N{{\mathbb N}} \def\Q{{\mathbb Q}}
  \def\S{\varSigma}
\def\poly{\operatorname{poly}}
  \def\cF{{\mathcal F}}
\def\rest#1{{\vert_{#1}}}
\begin{document}

\title[Cell decompositions for restricted sub-Pfaffian sets]{Effective
  cylindrical cell decompositions for restricted sub-Pfaffian sets}

\begin{abstract}
  The o-minimal structure generated by the restricted Pfaffian
  functions, known as restricted \emph{sub-Pfaffian} sets, admits a
  natural measure of complexity in terms of a \emph{format} $\cF$,
  recording information like the number of variables and quantifiers
  involved in the definition of the set, and a \emph{degree} $D$
  recording the degrees of the equations involved. Khovanskii and
  later Gabrielov and Vorobjov have established many effective
  estimates for the geometric complexity of sub-Pfaffian sets in terms
  of these parameters. It is often important in applications that
  these estimates are polynomial in $D$.

  Despite much research done in this area, it is still not known
  whether cell decomposition, the foundational operation of o-minimal
  geometry, preserves polynomial dependence on $D$. We slightly modify
  the usual notions of format and degree and prove that with these
  revised notions this does in fact hold. As one consequence we also
  obtain the first polynomial (in $D$) upper bounds for the sum of
  Betti numbers of sets defined using quantified formulas in the
  restricted sub-Pfaffian structure.
\end{abstract}

\author{Gal Binyamini} \address{Weizmann Institute of Science,
  Rehovot, Israel} \email{gal.binyamini@weizmann.ac.il}

\author{Nicolai Vorobjov} \address{Department of Computer Science,
  University of Bath, Bath, BA2 7AY, UK} \email{masnnv@bath.ac.uk}

\thanks{This research was supported by the ISRAEL SCIENCE FOUNDATION
  (grant No. 1167/17) and by funding received from the MINERVA
  Stiftung with the funds from the BMBF of the Federal Republic of
  Germany. This project has received funding from the European
  Research Council (ERC) under the European Union's Horizon 2020
  research and innovation programme (grant agreement No 802107)}

\subjclass[2020]{14P15,03C10,03C64}
\keywords{Pfaffian functions, cell decomposition}

\maketitle

\section{Statement of the main results}
\label{sec:main-statement}

\subsection{Setup}

Let $I:=[0,1] \subset {\mathbb R}$ and for $k, n \in \N,\ k \ge n$,
denote by $\pi^k_n:I^k\to I^n$ the projection map. We sometimes omit
$k$ if its meaning is clear from the context.

Pfaffian functions, introduced by Khovanskii in \cite{khov:dan,
  khov:book}, are analytic functions satisfying triangular systems of
Pfaffian (first order partial differential) equations with polynomial
coefficients.  We refer the reader to
\cite{gv:complexity-computations} for precise definition of Pfaffian
functions, based on {\em Pfaffian chain}, and examples of Pfaffian
functions in an open domain $G \subset {\mathbb R}^k$, which we assume
here for simplicity to be given by a product of intervals.

\begin{Def}[semi-Pfaffian set]
  Let $G $ be an open set in ${\mathbb R}^k$ and $I^k \subset G$.  A
  set $X \subset I^k$ is called \emph{(restricted) semi-Pfaffian} if
  it consists of points in $I^k$ satisfying a Boolean combination of
  atomic equations and inequalities of the kind $f=0$ or $f>0$, where
  $f$ is a Pfaffian functions defined in $G$. The \emph{format} of $X$
  is the number of variables $k$ and the {\em degree} of $X$ is the
  sum of degrees of all the Pfaffian functions appearing in the atomic
  formulas (i.e. the degrees of the polynomials defining these
  Pfaffian functions).
\end{Def}

Note that the degree of $X$ bounds from above the number of all atomic
equations and inequalities.  We assume that a Pfaffian chain has been
fixed once and for all, and all semi-Pfaffian sets under consideration
are defined from this single Pfaffian chain.

\begin{Def}[sub-Pfaffian set]\label{def:sub-pfaff}
  A set $Y \subset I^n$ is called {\em (restricted)
    sub-Pfaffian}\footnote{Below we will consider only
    \emph{restricted} sub-Pfaffian sets, and refer to them simply as
    sub-Pfaffian.} if $Y = \pi^k_n (X)$ for a semi-Pfaffian set
  $X \subset I^k$.
\end{Def}

In the special case of a {\em semi-algebraic set} $X$, the
Tarski-Seidenberg theorem states that the set $Y = \pi^k_n (X)$ is
also semi-algebraic, i.e., is a set of points satisfying a Boolean
combination of polynomial equations and inequalities.  By contrast, a
sub-Pfaffian set may not be semi-Pfaffian. 

It is customary in the literature to define the format and degree of a
sub-Pfaffian set as in Definition~\ref{def:sub-pfaff} to be the format
and degree of the set $X$. Below we introduce a variant of these
notions which turns out to behave better with respect to cell
decompositions. To avoid confusion we refer to these modified notions
as *-format and *-degree. See Remark~\ref{rem:comparing-formats} for a
comparison between the starred and unstarred versions of format and
degree.


\begin{Def}[*-format and *-degree of projections]\label{def:format}
  If $\{X_\alpha\},\ X_\alpha \subset I^{k_\alpha}$ is a finite
  collection of semi-Pfaffian sets and $X_\alpha^\circ$ is a connected
  component of $X_\alpha$, we define the *-format of the sub-Pfaffian
  set
  $Y:=\bigcup_\alpha \pi^{k_\alpha}_n(X_\alpha^\circ) \subset I^n$,
  with this particular representation as a union of projections, to be
  the maximum among the formats of sets $X_\alpha$, and the *-degree
  of $Y$ to be the sum of the degrees of sets $X_\alpha$.
\end{Def}

Strictly speaking, the *-format $\cF$ and *-degree $D$ is associated
to a particular presentation of a sub-Pfaffian set in the form
prescribed by Definition~\ref{def:format}, with different
presentations possibly giving rise to different pairs $(\cF,D)$. By a
slight abuse of notation we will say that a sub-Pfaffian set has
format $\cF$ and degree $D$ if there exists \emph{some} presentation
having this data.

We remark that since the restricted sub-Pfaffian sets form an
o-minimal structure, the connected components of semi-Pfaffian (or
even sub-Pfaffian) sets, as well as their projections, are again
sub-Pfaffian, so the sets $Y$ for which *-format and *-degree are
introduced in Definition~\ref{def:format} are indeed sub-Pfaffian.

\begin{Rem}
  It may be useful below for the reader to consider the notions of
  *-format and *-degree as sub-Pfaffian analogs of the notions of
  dimension and degree in the theory of algebraic or semi-algebraic
  geometry. Our main objective is to obtain, as in the semialgebraic
  case, bounds that depend polynomially on the degree for a fixed
  format.
\end{Rem}

In what follows we write that $a$ is $\const(b)$ (resp., $a$ is
$\poly_b(c)$) as shorthand notation meaning that $a \le \gamma(b)$
(resp., $a \le (c+1)^{\gamma(b)}$) where $\gamma(\cdot)$ is some
\emph{universally fixed} function (which may be different for each
occurrence of this notation in the text). Here $a$ and $c$ denote
natural numbers, and $b$ can involve one or several arguments of any
type.  We also allow several arguments in $\poly_b(c_1,\ldots,c_n)$
which we interpret as $\poly_b(c_1+\cdots+c_n)$. For each occurrence
of an asymptotic notation in this text, the implied functions can be
effectively and explicitly computed from the data defining the
Pfaffian chain (degrees of the differential equations involved), and
we omit these computations in the interest of brevity.

\subsection{Cell decompositions}

We recall the standard definitions of a cylindrical cell and a
cylindrical cell decomposition. Later in the paper we consider only
cylindrical decompositions and omit the prefix ``cylindrical'' for
brevity.

\begin{Def}[Cylindrical cell] A \emph{cylindrical cell} is defined by
  induction as follows.
  \begin{enumerate}
  \item Cylindrical 0-cell in ${\mathbb R}^n$ is an isolated point.
  \item Cylindrical 1-cell in ${\mathbb R}$ is an open interval
    $(a,b) \subset {\mathbb R}$.
  \item For $n \ge 2$ and $0 \le \ell < n$ a cylindrical
    $(\ell+1)$-cell in ${\mathbb R}^n$ is either a graph of a
    continuous bounded function $f:\ C \to {\mathbb R}$, where $C$ is
    a cylindrical a cylindrical $(\ell+1)$-cell in
    ${\mathbb R}^{n-1}$, or else a set of the form
    \begin{multline}
      \{(x_1, \ldots ,x_n) \in {\mathbb R}^n : \ (x_1, \ldots , x_{n-1}) \in C \\
      \text{ and } f(x_1,\ldots,x_{n-1})<x_n<g(x_1,\ldots ,x_{n-1}) \},
    \end{multline}
    where $C$ is a cylindrical $\ell$-cell in ${\mathbb R}^{n-1}$, and
    $f,g:\ C \to {\mathbb R}$ are continuous bounded functions such that
    $f(x_1, \ldots , x_{n-1})<g(x_1, \ldots , x_{n-1})$ for all points
    $(x_1, \ldots ,x_{n-1}) \in C$.
  \end{enumerate}
\end{Def}

This definition implies that any $\ell$-cell is homeomorphic to an
open $\ell$-dimensional ball.

\begin{Def}[Cylindrical cell decomposition] A \emph{cylindrical cell
    decomposition} ${\mathcal D}$ of a subset
  $A \subset {\mathbb R}^n$ is defined by induction as follows.
  \begin{enumerate}
  \item If $n=1$, then ${\mathcal D}$ is a finite family of pair-wise
    disjoint cylindrical cells (i.e., isolated points and intervals)
    whose union is $A$.
  \item If $n \ge 2$, then ${\mathcal D}$ is a finite family of
    pair-wise disjoint cylindrical cells in ${\mathbb R}^n$ whose
    union is $A$ and there is a cylindrical cell decomposition of
    $\pi^n_{n-1} (A)$ such that $\pi^n_{n-1} (C)$ is its cell for each
    $C \in {\mathcal D}$.
  \end{enumerate}
  Let $B \subset A$.  Then ${\mathcal D}$ is \emph{compatible} with
  $B$ if for any $C \in {\mathcal D}$ we have either $C \subset B$ or
  $C \cap B= \emptyset$ (equivalently, some subset
  ${\mathcal D}' \subset {\mathcal D}$ is a cylindrical cell
  decomposition of $B$).
\end{Def}

Our main result is as follows.

\begin{Thm}\label{thm:main}
  Let $\{Y_\alpha\},\ Y_\alpha \subset I^n$ be a collection of $N$
  sub-Pfaffian sets of *-format $\cF$ and *-degree $D$. Then there
  exists a sub-Pfaffian cell decomposition of $I^n$ compatible with
  each $Y_\alpha$ such that the number of cells is $\poly_\cF(N,D)$,
  their *-format is $\const(\cF)$, and their *-degree is
  $\poly_{\cF}(D)$.
\end{Thm}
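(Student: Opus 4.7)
The plan is to induct on the ambient dimension $n$, following the classical strategy for producing cylindrical cell decompositions but leveraging the *-format/*-degree formalism to keep all complexity estimates polynomial in $D$. I would first reduce to the case where each $Y_\alpha$ is presented as a single set of the form $\pi^{k_\alpha}_n(X_\alpha^\circ)$, with $X_\alpha^\circ$ a connected component of a semi-Pfaffian set $X_\alpha \subset I^{k_\alpha}$: a decomposition compatible with all these elementary pieces is automatically compatible with every $Y_\alpha$, and the total number of elementary pieces is $\poly_\cF(N,D)$.

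For the base case $n=1$, each $Y_\alpha \subset I$ is a finite union of points and open intervals. Projection does not increase the number of connected components, so by the Khovanskii--Gabrielov--Vorobjov estimates the number of such components is $\poly_\cF(D)$. Assembling all endpoints produces a decomposition of $I$ compatible with every $Y_\alpha$; each cell is then either a connected component of one of the $Y_\alpha$ or a complementary interval, and both naturally fit Definition~\ref{def:format} with *-format $\const(\cF)$ and *-degree $\poly_\cF(D)$.

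For the inductive step, set $\pi:=\pi^n_{n-1}$. To each $X_\alpha^\circ$ I would associate a semi-Pfaffian discriminant set $\Sigma_\alpha \subset I^{n-1}$: namely, the $\pi$-projection of the locus on $X_\alpha^\circ$ where the last-coordinate projection fails to be a local diffeomorphism, together with the points above which the combinatorial structure of the fiber (number and ordering of connected pieces of $X_\alpha^\circ \cap \pi^{-1}(\,\cdot\,)$) can change. The critical locus is cut out by the vanishing of suitable minors of the Jacobian of the Pfaffian functions defining $X_\alpha$, hence is semi-Pfaffian of *-format $\const(\cF)$ and *-degree $\poly_\cF(D)$; passing to connected components and then to the $\pi$-projection keeps $\Sigma_\alpha$ within Definition~\ref{def:format} with the same asymptotic bounds. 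Applying the induction hypothesis to the collection $\{\Sigma_\alpha\}$ produces a cell decomposition $\cD'$ of $I^{n-1}$ satisfying the stated bounds. By construction, over any cell $C \in \cD'$ the restriction $\pi\colon X_\alpha^\circ \cap \pi^{-1}(C) \to C$ is a finite covering whose sheets are graphs of continuous bounded functions $C\to I$, the number of sheets (bounded by Khovanskii-type estimates as $\poly_\cF(D)$) being constant in $C$. The cells above $C$ are then these graphs together with the open vertical intervals between consecutive graphs, or between a graph and the boundary of $I$; each such cell is a connected component of a suitable semi-Pfaffian set built from the $X_\alpha$ (either the original set, or the set "strictly between two prescribed sheets"), so fits Definition~\ref{def:format} with the required bounds.

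The step I expect to be the main obstacle is the construction of the discriminants $\Sigma_\alpha$: they must capture enough information so that the fiber structure is constant above every cell of $\cD'$, yet have *-degree only polynomial in $D$ rather than the exponential blow-up encountered in the classical formulations where one would have to quantifier-eliminate or symbolically enumerate the components of a fiber. The point of the *-format formalism is precisely that it absorbs both the $\pi$-projection and the passage to connected components without inflating the degree, so the only cost incurred in forming $\Sigma_\alpha$ comes from adjoining the Jacobian-minor equations, which is polynomial in $D$. Multiplying the cell counts at each inductive level by the number of sheets per fiber gives the total $\poly_\cF(N,D)$ cells; the *-format remains $\const(\cF)$ because each cell is built by a bounded sequence of operations on the original sets; and the *-degree is $\poly_\cF(D)$ by tracking degree contributions through each step of the construction.
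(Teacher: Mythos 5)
Your discriminant construction is the step where the proof must actually happen, and as written it does not. You say $\Sigma_\alpha$ is the projection of a Jacobian-minor critical locus together with ``the points above which the combinatorial structure of the fiber can change.'' The first piece is the critical locus of a projection restricted to the \emph{semi-Pfaffian} set $X_\alpha\subset I^{k_\alpha}$; what must actually be controlled is $\pi^n_{n-1}$ restricted to the \emph{sub-Pfaffian} set $Y_\alpha\subset I^n$, and Jacobian minors on $X_\alpha$ do not detect the frontier of $Y_\alpha$, collisions between distinct sheets of $X_\alpha$ mapping to the same point of $Y_\alpha$, or interactions between distinct connected components of $X_\alpha$ along shared boundary. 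Your second clause names exactly the missing content but supplies neither a construction nor an argument that the result has *-degree $\poly_\cF(D)$ --- it is a placeholder standing in for the whole difficulty. In particular nothing in your plan prevents two components $X_\alpha^\circ,X_\alpha^{\circ\circ}$ from becoming glued along common boundary under closure or stratification, which the paper explicitly identifies as the obstruction that makes the naive CAD strategy fail in this setting.

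The paper never attempts to build a discriminant intrinsically from the sub-Pfaffian set $Y_\alpha$. It proves Proposition~\ref{prop:main} purely for \emph{semi-Pfaffian} inputs $X_\alpha\subset I^\ell$, yielding a decomposition of $I^n$ compatible with $\pi_n(X_\alpha)$, by a lexicographic induction on $(n,k)$ with $k=\max_\alpha\dim\pi_{n-1}(X_\alpha)$ --- not on $n$ alone; the coordinate $k$ is what allows recursion on the lower-dimensional bad locus $\Sigma$ within the same ambient dimension. The analogue of your $\Sigma_\alpha$ is assembled there from the fiber-cutting Lemma~\ref{lem:fiber-cut}, stratification and frontier operations, and a fiber-product (double-point) set $Z_{\alpha,\beta}$, which is the piece your sketch is silent about. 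Theorem~\ref{thm:main} is then deduced by applying Proposition~\ref{prop:main} \emph{in the upstairs space $I^k$}, not in $I^n$: a cell decomposition of $I^k$ compatible with the semi-Pfaffian $X_{\alpha,\beta}$ is automatically compatible with each component $X_{\alpha,\beta}^\circ$ because cells are connected, and its projection to $I^n$ is then compatible with $Y_\alpha$. Your plan of working directly in $I^n$ with the $Y_\alpha$ and inducting only on $n$ bypasses all of these devices, which is precisely why the *-degree bound on your $\Sigma_\alpha$ cannot be justified as stated.
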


The proof of Theorem~\ref{thm:main} is given
in~\secref{sec:proof-main}.

\subsection{Sub-pfaffian sets defined by quantified formulas}

In this paper we will consider formulas in the \emph{restricted
  sub-Pfaffian language}, which we define to be the first-order
language consisting of logical connectives, quantifiers, atomic
predicates of the form $({\bf x} \in Y)$ where $Y$ is a restricted
sub-Pfaffian set, and no function symbols. All formulas referred to
below are assumed to be in this language. We introduce the notion of
*-format and *-degree for quantified formulas in this language as
follows.

\begin{Def}[*-format and *-degree of quantified formulas]\label{def:formula-complexity}
  Let $\phi$ be a formula, not necessarily in a prenex
  form. We define the *-degree $D(\phi)$ to be the sum of $D(Y)$ for
  all $Y$ appearing in the atomic predicates of $\phi$. We inductively
  define the *-format $\cF(\phi)$ as follows.
  \begin{itemize}
  \item For $\phi=({\bf x} \in Y)$ we define $\cF(\phi)$ to be the
    *-format of $Y$.
  \item For $\phi=\bigvee_{j=1}^k\phi_j$
    we define
    $\cF(\phi)=\max_j\cF(\phi_j)$.
  \item For $\phi=\bigwedge_{j=1}^k\phi_j$ we define
    $\cF(\phi)=1+\max_j\cF(\phi_j)$.
  \item For $\phi=\exists {\bf x}\ \phi'$ we define
    $\cF(\phi)=\cF(\phi')$.
  \item For $\phi=\lnot\phi',\forall {\bf x}\ \phi'$ we define
    $\cF(\phi)=1+\cF(\phi')$.
  \end{itemize}
\end{Def}

In particular, the format $\cF(\phi)$ is bounded from above by the
maximum over the formats of the atomic predicates of $\phi$, plus the
depth of the parse-tree for $\phi$.

Definition~\ref{def:formula-complexity} is motivated by the following
theorem, which shows that the *-format and *-degree of a set defined
by a sub-Pfaffian formula $\phi$ can be bounded in terms of
$\cF(\phi)$ and $D(\phi)$.

\begin{Thm}\label{thm:formulas}
  Let $\phi$ be a sub-Pfaffian formula as above, with *-format $\cF$
  and *-degree $D$. Then the set defined by $\phi$ has *-format
  $\const(\cF)$ and *-degree $\poly_{\cF}(D)$.
\end{Thm}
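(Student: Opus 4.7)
The plan is to proceed by induction on the parse tree of $\phi$. For each subformula $\psi$ let $Y(\psi)$ denote the set it defines; I will show that $Y(\psi)$ admits a presentation of the form prescribed by Definition~\ref{def:format} with *-format $\const(\cF(\psi))$ and *-degree $\poly_{\cF(\psi)}(D(\psi))$. The base case of an atomic predicate $\psi=(\mathbf{x}\in Y)$ is immediate from the definitions.

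The two operations that do not raise $\cF$ are handled without any appeal to Theorem~\ref{thm:main}. A disjunction $\psi = \bigvee_j \psi_j$ is handled by concatenating the inductive representations $Y(\psi_j)=\bigcup_\alpha \pi(X_\alpha^\circ)$, so the *-format of $Y(\psi)$ is the maximum of those of the disjuncts and its *-degree is the sum, matching the rules in Definition~\ref{def:formula-complexity}. An existential quantification $\psi = \exists \mathbf{x}\,\psi'$ is handled by composing the new projection with those already present in the representation of $Y(\psi')$, leaving both *-format and *-degree unchanged.

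The three remaining operations, which do raise $\cF$ by $1$, are all handled uniformly using Theorem~\ref{thm:main}. For a conjunction $\psi=\bigwedge_j \psi_j$, apply Theorem~\ref{thm:main} to the collection $\{Y(\psi_j)\}$; by induction each member has *-format $\const(\cF(\psi))$ and *-degree $\poly_{\cF(\psi)}(D(\psi))$, so the theorem produces a cell decomposition of $I^n$ compatible with every $Y(\psi_j)$ consisting of $\poly_{\cF(\psi)}(D(\psi))$ cells, each itself sub-Pfaffian of *-format $\const(\cF(\psi))$ and *-degree $\poly_{\cF(\psi)}(D(\psi))$. The intersection $Y(\psi)$ is then the union of those cells lying inside every $Y(\psi_j)$; since the number of cells is polynomial in the degree, the union preserves the *-format and raises the *-degree only by a polynomial factor. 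Negation $\psi=\neg\psi'$ is handled identically with the single set $Y(\psi')$, collecting the cells disjoint from $Y(\psi')$, and universal quantification is reduced via $\forall\mathbf{x}\,\psi'\equiv\neg\exists\mathbf{x}\,\neg\psi'$.

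The main obstacle is controlling the compounding of the $\const$ and $\poly_\cF$ bounds as the induction ascends the parse tree. This is governed by the key observation that every invocation of Theorem~\ref{thm:main} in the above argument occurs at a node labeled $\wedge$, $\neg$, or $\forall$, and each such node raises $\cF(\psi)$ by exactly $1$; thus along any root-to-leaf path in the parse tree the theorem is applied at most $\cF(\phi)$ times. Each application replaces a current *-format bound $F$ by $\gamma(F)$ and a current *-degree bound $P(D)$ by $(P(D)+1)^{\gamma(F)}$ for some universal function $\gamma$; iterating this substitution $\cF(\phi)$ times starting from the atomic data produces final bounds that are functions of $\cF(\phi)$ alone for the format and of the form $(D(\phi)+1)^{G(\cF(\phi))}$ for the *-degree, which are exactly the $\const(\cF)$ and $\poly_{\cF}(D)$ bounds claimed.
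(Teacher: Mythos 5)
Your proof is correct and takes essentially the same approach as the paper: handle disjunction and existential quantifiers for free, and reduce everything else to invocations of Theorem~\ref{thm:main} at nodes that increment $\cF$. The only cosmetic difference is that you treat conjunction directly by taking a common cell decomposition of $\{Y(\psi_j)\}$, whereas the paper first rewrites $\bigwedge_j\psi_j$ as $\lnot\bigvee_j\lnot\psi_j$ and then applies the negation case; your explicit accounting of the compounding of $\gamma(\cdot)$ along a root-to-leaf path is a useful elaboration of what the paper dismisses as a ``routine recursive argument.''
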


The proof of Theorem~\ref{thm:formulas} is given
in~\secref{sec:proof-formulas}.

\subsection{Upper bound on homologies}

Theorem~\ref{thm:main} implies in particular an upper bound
$\poly_\cF(D)$ for the number of connected components of a
sub-Pfaffian set of *-format $\cF$ and *-degree $D$. Below we
generalize this to an upper bound for the homology of a sub-Pfaffian
set in terms of the *-format and *-degree.

\begin{Thm}\label{thm:bound}
  The sum of the Betti numbers of a sub-Pfaffian set of *-format $\cF$
  and *-degree $D$ is bounded by $\poly_\cF(D)$.
\end{Thm}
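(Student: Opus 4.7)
The plan is to deduce Theorem~\ref{thm:bound} directly from Theorem~\ref{thm:main}. I would apply Theorem~\ref{thm:main} to the singleton family $\{Y\}$ to obtain a cylindrical cell decomposition $\cD$ of $I^n$ compatible with $Y$, with $M=\poly_\cF(D)$ cells, each of *-format $\const(\cF)$, *-degree $\poly_\cF(D)$, and homeomorphic to an open ball. The cells lying in $Y$ partition $Y$ into at most $M$ contractible locally closed strata.

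The underlying topological principle is the classical Morse-type inequality: the sum of Betti numbers of a finite CW complex is bounded by the total number of cells. To apply this in the present setting, one first refines $\cD$ so that it becomes a regular CW structure on $\bar Y$, with $Y$ and $\partial Y=\bar Y\setminus Y$ both being unions of cells. This regularization is accomplished by iteratively reapplying Theorem~\ref{thm:main}, enlarging the compatible family at each step to include the closures of all cells from the previous step (each of which is sub-Pfaffian of *-format $\const(\cF)$ and *-degree $\poly_\cF(D)$). Only a bounded number of iterations, depending on $\cF$, is needed, and the polynomial bounds of Theorem~\ref{thm:main} keep the total cell count at $\poly_\cF(D)$ throughout.

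With a regular decomposition compatible with $Y$, $\bar Y$, and $\partial Y$ in hand, the Morse inequality bounds $\sum_i b_i(\bar Y)$ and $\sum_i b_i(\partial Y)$ by the number of cells of $\bar Y$ and $\partial Y$ respectively, both $\poly_\cF(D)$. The singular Betti numbers of $Y$ itself are then extracted from the long exact sequence of the pair $(\bar Y,\partial Y)$, yielding $\sum_i b_i(Y)=\poly_\cF(D)$. An equivalent route is the spectral sequence of the skeleta filtration with $E_1^{p,q}=\bigoplus_{\dim S_\alpha=p}H^{p+q}_c(S_\alpha)$, which is concentrated on the row $q=0$ since each open $p$-cell has compactly supported cohomology concentrated in degree $p$; this yields the analogous bound on compactly supported cohomology, which one then transfers to singular cohomology via the same pair.

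The main obstacle is the regularization step: verifying rigorously that iterated application of Theorem~\ref{thm:main} produces a regular decomposition in a bounded number of steps while maintaining a polynomial-in-$D$ cell count. The polynomial dependence on $D$ granted by the *-format and *-degree framework, which is exactly the content of Theorem~\ref{thm:main}, is what makes this iteration tractable; under the classical notions of format and degree the cell count would blow up at each step and the argument would fail.
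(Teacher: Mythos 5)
There is a genuine gap, concentrated at exactly the point you flag as ``the main obstacle.'' Your plan is to bootstrap a regular CW structure by iterating Theorem~\ref{thm:main}, adding cell closures to the compatible family at each stage, and you assert that a bounded number of iterations suffices. That assertion is not justified, and as stated I do not believe it: after one re-decomposition the cells of $\cD_0$ get subdivided, and the frontiers of the new, smaller cells of $\cD_1$ lie partly in the \emph{interiors} of the old cells, so they are not covered by the closures you added. Each iteration creates fresh interior frontiers, and nothing in the *-format/*-degree bookkeeping forces convergence after $\const(\cF)$ rounds. Achieving a cell decomposition satisfying the frontier condition is precisely the non-trivial combinatorial-topological work hidden inside the o-minimal triangulation theorem; it is not a corollary of repeated cell decomposition, and in the paper it is handled by invoking and effectivizing the full triangulation argument of Coste/van den Dries (Theorem~\ref{thm:triangulation}) rather than by iterating Theorem~\ref{thm:main}.

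A second issue is the passage from the decomposition to the singular Betti numbers of $Y$. The long exact sequence of the pair $(\bar Y,\partial Y)$ computes $H_*(\bar Y,\partial Y)$, which is Borel--Moore homology of $Y$ (dually, compactly supported cohomology), and this already presupposes $Y$ open in $\bar Y$, i.e.\ $Y$ locally closed, which an arbitrary sub-Pfaffian set need not be. Your spectral-sequence route has the same feature: it bounds $\sum_p \dim H^p_c(Y)$, and the phrase ``transfers to singular cohomology via the same pair'' glosses over the fact that $H^*_c$ and $H^*$ genuinely differ for non-compact, non-locally-closed sets. The paper avoids both difficulties at once: Theorem~\ref{thm:triangulation} applied with ambient $Y=[0,1]^n$ yields a definable homeomorphism $\Phi$ under which $X$ corresponds to a union of $\poly_\cF(D)$ open simplices of a finite complex $K$. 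That preimage is semialgebraic, and the known bound \cite[Theorem~1]{gv:dcg} for the (ordinary, singular) Betti numbers of semialgebraic sets defined by a bounded number of simplices closes the argument directly, with no regularization step and no Borel--Moore-to-singular transfer required.

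So your overall strategy --- reduce to a finite polynomially-bounded combinatorial decomposition and then count --- is the same in spirit as the paper's, but the two steps where the real work happens (obtaining a decomposition with good frontier behaviour, and relating the resulting combinatorics to singular homology of a possibly non-locally-closed set) are exactly the steps left unproved in your sketch. To repair it you would essentially need to re-derive the effective triangulation theorem, which is what the paper does.
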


The proof of Theorem~\ref{thm:bound} is given
in~\secref{sec:proof-bound}. Combining Theorem~\ref{thm:formulas} with
Theorem~\ref{thm:bound} we obtain an upper bound for the homologies of
(restricted) sub-Pfaffian sets defined by quantified formulas, which
is polynomial in the degrees of the Pfaffian functions involved (for
formulas of a fixed format). Such bounds have been obtained by
Khovanskii \cite{khov:book} for Pfaffian varieties; by Zell
\cite{zell:semi-pfaff} for semi-Pfaffian sets; and by Gabrielov,
Vorobjov and Zell \cite{gvz:betti,gv:approximation} for sets defined
using quantifiers under certain topological
restrictions. See~\secref{sec:comparison} for some discussion of these
results. However in spite of the significant work around the
complexity of sub-Pfaffian sets, a polynomial estimate for general
definable sets as provided by Theorem~\ref{thm:bound} does not seem to
have been previously known.

\subsection{Applications and motivation}

Our goal in this paper is to provide a framework that can be used to
effectivize most of the results of o-minimal geometry in the
restricted sub-Pfaffian structure, with polynomial dependence on the
degree. Indeed, many proofs in the theory of o-minimality can be
carried out verbatim using Theorems~\ref{thm:main}
and~\ref{thm:formulas} to obtain such polynomial bounds. As an example
we have the following.

\begin{Cor}
  Let $A\subset I^n$ be a restricted sub-Pfaffian set of *-format
  $\cF$ and *-degree $D$. Then the topological closure and the smooth
  part of $A$ have *-format $\const(\cF)$ and *-degree $\poly_\cF(D)$.

  Let $f:A\to B$ be a restricted sub-Pfaffian map of *-format $\cF$
  and $*$-degree $D$. Then for every $r\in\N$ the $C^r$-smooth locus
  of $f$ has *-format $\const(\cF,r)$ and *-degree $\poly_{\cF,r}(D)$.
\end{Cor}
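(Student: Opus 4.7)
The plan is to exhibit each of these operations as a sub-Pfaffian formula with *-format and *-degree controlled by the input data, and then to invoke Theorem~\ref{thm:formulas}.

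For the topological closure, one writes
\[
  \ol A = \bigl\{x\in I^n : \forall \e>0\ \exists y\in I^n\ \bigl((y\in A)\wedge \textstyle\sum_{i=1}^n (x_i-y_i)^2<\e\bigr)\bigr\}.
\]
The atomic predicate $(y\in A)$ has *-format $\cF$ and *-degree $D$; the polynomial inequality is a semi-algebraic atomic predicate of bounded complexity; and the two quantifiers together with the conjunction contribute $O(1)$ to the *-format. Theorem~\ref{thm:formulas} then gives the stated bounds for $\ol A$.

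For the smooth part of $A$, I would express ``$x$ is a smooth point of $A$ of local dimension $d$'' as follows: there exist a radius $\delta>0$ and a coordinate projection $\pi:\R^n\to\R^d$ such that $A\cap B(x,\delta)$ is the graph of a (necessarily unique and continuous) function $g$ over $\pi(A\cap B(x,\delta))$, and $g$ is differentiable at every point of its domain. In the restricted sub-Pfaffian setting the implicit function theorem combined with analyticity of Pfaffian chains promotes this $C^1$-regularity to full $C^\infty$-regularity, so this definition captures the usual smooth part. Differentiability of $g$ at a point is the Taylor-remainder formula $\exists \f\ \forall\e>0\ \exists\delta'>0\ \forall y\ (\|y-x\|<\delta'\Rightarrow |g(y)-g(x)-\f\cdot(y-x)|<\e\|y-x\|)$, which has bounded format and references $A$ only through bounded-format atomic predicates. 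Taking a finite disjunction over $d\in\{0,\ldots,n\}$ and over coordinate projections yields a sub-Pfaffian formula of *-format $\const(\cF)$ and *-degree $O(D)$; Theorem~\ref{thm:formulas} concludes this case.

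The $C^r$-smooth locus of a sub-Pfaffian map $f:A\to B$ is treated analogously, using the graph $\Gamma_f$ (a sub-Pfaffian set of the same *-format and *-degree data as $f$) as the basic atomic predicate. A point $x\in A$ lies in the $C^r$-smooth locus iff $x$ is a smooth point of $A$ and, reading $f$ locally as a function of coordinates on a chart of $A$ through $\Gamma_f$, each component of $f$ satisfies a Taylor-remainder formula of order $r$. Each additional order of differentiability contributes a bounded number of quantifier alternations, yielding a formula of *-format $\const(\cF,r)$ and *-degree $O(D)$; Theorem~\ref{thm:formulas} then produces the asserted bound. The main obstacle throughout is the careful bookkeeping of these Taylor-remainder formulas to confirm that their *-format contribution is independent of $D$ and, in the map case, depends on $r$ only additively; once this is verified, the result follows by direct application of Theorem~\ref{thm:formulas}.
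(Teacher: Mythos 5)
Your proposal is correct and follows essentially the same route as the paper: both express the closure, the smooth part, and the $C^r$-smooth locus by first-order formulas in the restricted sub-Pfaffian language (``$\epsilon$-$\delta$'' definitions of local graphs and of differentiability) and then invoke Theorem~\ref{thm:formulas}. The only inessential differences are that the paper quantifies over general linear maps $T:\R^n\to\R^m$ where you use a finite disjunction over coordinate projections, and that your side remark about $C^1$ upgrading to $C^\infty$ is not needed for (and plays no role in) the argument.
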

\begin{proof}
  All of the statements follow by defining the relevant sets using
  first-order formulas in the restricted sub-Pfaffian language and
  applying Theorem~\ref{thm:formulas}.
 
  For instance, to define the smooth part of a set $A$ having
  dimension $m\in\N$ at each point, we can define it as a locus of
  points $x \in A$ such that a neighbourhood of $x$ in $A$ is the
  graph of a $C^1$-smooth map. More precisely, the smooth part of $A$
  is the set of all points $x \in A$ each having a neighbourhood $U$
  in $A$ such that there exists a linear map $T:\R^n \to\R^m$ such
  that the restriction $T|_U$ is one-to-one, $T(U)$ contains a
  neighbourhood of $T(x)$ in $\R^m$, and the inverse to $T|_U$ is a
  $C^1$-smooth map with the Jacobian non-vanishing at every point in
  $T(U)$. It is straightforward to reduce all of this to first-order
  formulas (an ``$\epsilon$-$\delta$ definition''), and we leave the
  details to the reader.
\end{proof}

One of our main motivations for developing the theory in this paper is
in relation to the Pila-Wilkie counting theorem \cite{pila-wilkie} and
its applications in diophantine geometry (see the survey
\cite{scanlon:survey}).

Several applications of the counting theorem involve the geometry of
elliptic curves and abelian varieties --- the most famous example
perhaps being the proof of the Manin-Mumford conjecture by
Pila-Zannier \cite{pz:manin-mumford}. In these applications one
considers sets defined using elliptic and abelian functions. Since
these functions are restricted sub-Pfaffian by an observation of
Macintyre \cite{macintyre:pfaffian}, the definable sets are restricted
sub-Pfaffian. One can therefore hope to effectivize the counting
theorem in this context, and subsequently obtain effective results for
diophantine problems. Toward this end Jones and Thomas
\cite{jt:counting-surfaces} have established a version of the counting
theorem for certain surfaces definable in the restricted sub-Pfaffian
structure, and Jones and Schmidt have explored several applications of
this in diophantine geometry \cite{js:mm-extensions,js:pfaffian-def}.

In an upcoming paper by the first author with Jones, Schmidt and
Thomas, we use the framework developed in the present paper to extend
the result of \cite{jt:counting-surfaces} to arbitrary restricted
sub-Pfaffian sets of arbitrary dimension, and improve the dependence
of the effective constants to make them polynomial in the
degrees. This can be viewed as a case of effectivizing an o-minimal
proof in the sense described above, albeit for a much more technically
involved statement. We expect that this result will greatly extend the
scope of potential applications in diophantine geometry, as well as
give rise to more reasonable (indeed, polynomial in degrees) estimates
in these applications.

\subsection{Comparison with previous results}
\label{sec:comparison}

Gabrielov and Vorobjov have established many results on effectivity of
operations in the restricted sub-Pfaffian category. For a survey we
refer the reader to \cite{gv:complexity-computations}. In these works,
the notion of format and degree for sub-Pfaffian sets is similar to
ours but more straightforward: one considers simply projections of
semi-Pfaffian sets, rather than projections of their connected
components. In particular, in \cite{gv:cell-decomposition} Gabrielov
and Vorobjov prove a cell decomposition result quite similar to our
Theorem~\ref{thm:main}, as follows (where we omit the explicit,
doubly-exponential dependence on the format).

\begin{Fact}\label{fact:main-old}
  Let $\{Y_\alpha \},\ Y_\alpha \subset I^n$ be a collection of $N$
  sub-Pfaffian sets of format $\cF$ and degree $D$. Then there exists
  a linear transformation $L:\R^n\to\R^n$ and a cylindrical cell
  decomposition of $\R^n$, compatible with each $L(Y_\alpha)$, such
  that each cell in the decomposition is sub-Pfaffian and has format
  $\poly_\cF(N,D)$, and the number of cells and their degree is
  $\poly_{\cF}(N, D)$.
\end{Fact}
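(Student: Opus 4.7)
The plan is to establish Fact~\ref{fact:main-old} by induction on the ambient dimension $n$, following the standard Gabrielov--Vorobjov scheme that builds the cylindrical decomposition by projecting onto coordinate hyperplanes after a generic linear change of variables.

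First, I would reduce to the case of a single semi-Pfaffian set $X \subset I^k$ with $Y = \pi^k_n(X)$, the general case following by taking the union of the defining semi-Pfaffian data. The base case $n=1$ is immediate: a restricted sub-Pfaffian subset of $I$ consists of finitely many points and open intervals, and by Khovanskii's bound on the number of connected components of a semi-Pfaffian set, the number of these cells is $\poly_\cF(N,D)$ and each is described by Pfaffian data of controlled format and degree.

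For the inductive step, the core idea is to choose a linear transformation $L \colon \R^n \to \R^n$ that places all the sets $Y_\alpha$ (and the associated $X_\alpha$) into a sufficiently generic position with respect to the projection $\pi^n_{n-1}$. Specifically, $L$ is chosen so that for each $Y_\alpha$ the fiber $Y_\alpha \cap (\pi^n_{n-1})^{-1}(\vx')$ has, over a generic $\vx' \in I^{n-1}$, a finite and uniformly bounded number of connected components, and so that the relevant critical loci of the projection from the semi-Pfaffian lifts $X_\alpha$ are well-defined hypersurfaces. Having fixed $L$, the key geometric ingredient is to form the \emph{discriminant} set $\Delta \subset I^{n-1}$ consisting of those $\vx'$ over which the fiber structure changes. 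This $\Delta$ can be expressed as the image under $\pi$ of the critical set of $\pi\rest{X_\alpha}$ together with the frontiers of $X_\alpha$; in particular $\Delta$ is itself sub-Pfaffian and one can read off a presentation of $\Delta$ as a projection of a semi-Pfaffian set, whose format is bounded in terms of $\cF$ and whose degree is $\poly_\cF(N,D)$, using the effective Khovanskii-type bounds for critical points of Pfaffian maps.

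Next I apply the inductive hypothesis to the collection $\{L(Y_\alpha),\Delta\}$ in $I^{n-1}$, obtaining a cylindrical decomposition of $I^{n-1}$ compatible with all of them, having the claimed complexity bounds. Over each base cell $C$ of this decomposition, the fibers of each $L(Y_\alpha)$ have constant combinatorial type, so they can be described as a union of graphs of continuous bounded sub-Pfaffian functions $f_1 < \dots < f_r$ on $C$ together with the bands between consecutive graphs; these functions are obtained by implicit-function-style selection from the semi-Pfaffian defining data, and thus inherit sub-Pfaffian presentations of format $\poly_\cF(N,D)$ and degree $\poly_\cF(N,D)$. Assembling these graphs and bands over every base cell yields the desired cylindrical decomposition of $I^n$ compatible with every $L(Y_\alpha)$, and a direct count shows the total number of cells and the per-cell complexity bounds propagate correctly through the induction.

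The main obstacle, and the reason the format is only $\poly_\cF(N,D)$ rather than $\const(\cF)$ here, is the discriminant step: expressing $\Delta$ and the graph-selector functions $f_i$ requires introducing new auxiliary existentially quantified variables (for critical points, for selecting the $i$-th fiber component, and for handling frontiers), which inflates the number of variables in the defining projection by an amount depending on $N$ and $D$. Controlling this inflation so that it remains polynomial in $D$ (rather than exponential) is precisely where Khovanskii's fewnomial bounds on connected components of semi-Pfaffian sets are used, and it is the bookkeeping for this inflation through the induction that constitutes the technical heart of the argument. The improvement to $\const(\cF)$ in Theorem~\ref{thm:main} is exactly what the modified *-format in Definition~\ref{def:format} is designed to enable.
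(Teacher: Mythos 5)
Fact~\ref{fact:main-old} is not proved in this paper: it is cited from Gabrielov and Vorobjov \cite{gv:cell-decomposition} and serves only as a point of comparison for Theorem~\ref{thm:main}. There is therefore no proof in the paper against which to compare your attempt; what follows is an assessment of the sketch on its own terms.

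The overall shape is right for a CAD-style recursion --- project to $I^{n-1}$ after a generic linear change, form a discriminant $\Delta$, decompose $I^{n-1}$ inductively compatibly with $\Delta$ and the projected sets, then lift by graphs and bands --- and your diagnosis of why the unstarred format picks up a $D$-dependence is consistent with the discussion surrounding Remark~\ref{rem:comparing-formats}. But several decisive steps are asserted rather than argued. The opening reduction ``to a single semi-Pfaffian set by taking the union of the defining data'' is incorrect as stated: compatibility with $X_1\cup X_2$ is strictly weaker than compatibility with $X_1$ and $X_2$ separately, so the whole tuple $\{X_\alpha\}$ must be carried through the recursion. More substantively, you never exhibit $\Delta$ as a projection of a semi-Pfaffian set with controlled format and degree. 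Since $Y_\alpha=\pi^k_n(X_\alpha)$, the critical locus and frontier of $Y_\alpha$ are not directly available as semi-Pfaffian data; one must pass through fiberwise stratification and fiber-cutting of $X_\alpha\subset I^k$ and effective frontier computations there (the Pfaffian analogs of Facts~\ref{fact:stratification}, \ref{fact:closure} and Lemma~\ref{lem:fiber-cut}), and then prove that the existentially-defined set so produced actually contains every point over which the fiber structure jumps. You also need to prove, not merely state, that over any base cell $C$ disjoint from $\Delta$ each $L(Y_\alpha)$ decomposes into finitely many continuous sections defined on \emph{all} of $C$; the danger is that a local section hits the frontier of $Y_\alpha$ as one moves across $C$, and it is precisely to exclude this that $\Delta$ must absorb images of frontiers and singular strata --- compare the role of~\eqref{eq:good-boundary} in the proof of Proposition~\ref{prop:main}, which faces and resolves the same issue in the *-format setting. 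Filling in these steps, together with the complexity bookkeeping you defer to ``a direct count,'' constitutes the technical bulk of \cite{gv:cell-decomposition}.
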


There are two main differences compared to Theorem~\ref{thm:main}:
first, the application of the linear transformation $L$; and second,
more crucially, the dependence of the format of the cells on the
degree $D$. These are crucial limitation, which make it impossible to
apply this result to obtain analogs of Theorems~\ref{thm:formulas}
and~\ref{thm:bound}: first, because in the recursive proofs it is
essential that the cells preserve the order of coordinates; and
second, more crucially, because after the first recursive application
the format becomes dependent on $D$, and the complexity of any further
operations performed on such cells is no longer polynomial in $D$.

We are also unable to sharpen Fact~\ref{fact:main-old} to eliminate
the dependence of the format on $D$, and this appears to be a
fundamental difficulty\footnote{The linear transformation problem
  seems less fundamental, and could probably be avoided using a
  strategy similar to the one used in the present paper.}. Our main
observation in the present paper is that with the revised notions of
*-format and *-degree it is possible, at a crucial point in the cell
decomposition algorithm, to produce cells with format independent of
$D$. However other aspects of the algorithm become more delicate with
these notions. The main reason is that the known approaches to
effective cell decomposition involve reductions using topological
closure and frontier, and in our setting one must take care to avoid
different components becoming glued along their common boundary when
performing these operations. For this reason the strategy of cell
decomposition employed in the present paper differs significantly from
that of \cite{gv:cell-decomposition}.

\begin{Rem}[Comparing the two notions of format]\label{rem:comparing-formats}
  Since the number of connected components of a semi-Pfaffian set
  $X_\alpha$ of format $\cF$ and degree $D$ is bounded by
  $\poly_\cF(D)$ according to Fact~\ref{fact:semi-pfaff-betti}, it is
  clear that a sub-Pfaffian set of format $\cF$ and degree $D$ has
  *-format $\cF$ and *-degree $\poly_\cF(D)$.

  To obtain bounds in the converse direction, it is necessary to compute
  the format and degree (as a sub-Pfaffian set) of a connected
  component $X_\alpha^\circ$, for a given semi-Pfaffian set $X_\alpha$
  of format $\cF$ and degree $D$. Currently we only know how to do
  this by computing a cell-decomposition using
  Fact~\ref{fact:main-old}, giving $\poly_\cF(D)$ cells of format
  $\poly_\cF(D)$. Accordingly a sub-Pfaffian set of *-format $\cF$ and
  *-degree $D$ has format and degree $\poly_\cF(D)$. The dependence of
  the format on $D$ is what makes the notions of (unstarred) format
  and degree much less efficient. We do not venture to conjecture
  whether this dependence can be removed in general, but in any case
  this appears to be a highly non-trivial problem.
\end{Rem}

We also note that analogs of Theorem~\ref{thm:bound} for quantified
formulas have been pursued, using an entirely different topological
approach, in the work of Gabrielov, Vorobjov and Zell
\cite{gvz:betti}. This paper establishes similar bounds (and also with
an explicit dependence on the format) for sets defined by quantified
formulas in a prenex form:
\begin{equation}
  \{x\in[0,1]^n : Q_1y_1Q_2y_2\cdots Q_\nu y_\nu ((x,y)\in X)\}
\end{equation}
where $X$ is a semi-Pfaffian set that is either open or closed, and
$Q_1,\ldots,Q_\nu\in\{\exists,\forall\}$. For formulas involving only
existential quantifiers, Gabrielov and Vorobjov
\cite{gv:approximation} have managed to remove the topological
condition on $X$ by an approximation method.

\begin{Rem}
  In an unpublished manuscript Clutha \cite{clutha} extended the
  method of \cite{gv:approximation} to an arbitrary number of
  quantifiers, combining with the ideas of \cite{gvz:betti}, and
  claimed in particular a result implying Theorem~\ref{thm:bound} for
  arbitrary formulas. However the proof of this result contains a
  substantial gap and we are presently not able to repair it.
\end{Rem}

\section{Preliminaries on semi-Pfaffian sets}
\label{sec:pfaff-background}

We will require the notion of a (weak) stratification of a
semi-Pfaffian set.

\begin{Def}[\protect{\cite[Definition~5]{gv:stratifications}}]
  A \emph{(weak) stratification} of a semi-Pfaffian set $X$ is a
  subdivision of $X$ into a disjoint union of smooth, not necessarily
  connected, semi-Pfaffian subsets $X_\alpha$ called
  \emph{strata}. The system of equalities and inequalities for each
  stratum $X_\alpha$ of codimension $k$ includes a set of $k$
  equalities $h_{\alpha,1}=\cdots=h_{\alpha,k}=0$ whose differentials
  define the tangent space of $X_\alpha$ at every point of $X_\alpha$.
\end{Def}

We will use the following formulation of the main result of
\cite{gv:stratifications}.

\begin{Fact}[\protect{\cite[Theorem~1]{gv:stratifications}}]\label{fact:stratification}
  Let $X\subset I^n$ be semi-Pfaffian of format $\cF$ and degree
  $D$. Then there is a semi-Pfaffian stratification
  $X=\bigcup_\alpha X_\alpha$ of $X$ where the number of strata and
  their degrees are $\poly_\cF(D)$ and their format is $\const(\cF)$.
\end{Fact}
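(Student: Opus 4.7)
The plan is to follow the standard Gabrielov--Vorobjov stratification strategy by induction on dimension. Since the format of a semi-Pfaffian set is just the ambient dimension (at most $\cF$), format preservation is essentially automatic; the real work is controlling the degree and the number of strata through the recursion.

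First I would reduce to basic semi-Pfaffian sets. Since $X$ has degree $D$, its defining Boolean combination involves at most $s \le D$ atomic predicates $f_i = 0$ or $f_i > 0$ with $\sum_i \deg f_i = D$. Decomposing $X$ disjointly by sign pattern gives at most $3^s = \poly_\cF(D)$ basic pieces $X_\sigma = \{x : \sign f_i(x) = \sigma_i\}$ for $\sigma \in \{-,0,+\}^s$, each of degree $\poly_\cF(D)$; it suffices to stratify each basic piece.

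Next, for $X_\sigma$ with equation indices $E = \{i : \sigma_i = 0\}$, I would stratify by the rank of the Jacobian $J_E = (\partial f_i / \partial x_j)_{i \in E,\ j=1,\ldots,n}$. On the open sub-locus of $X_\sigma$ where $\operatorname{rank} J_E = |E|$, the set is a smooth codimension-$|E|$ manifold whose tangent space is cut out by the differentials $\{df_i\}_{i \in E}$; this is exactly the shape required by the stratum definition. On the complement, one adjoins the vanishing of the $|E| \times |E|$ minors of $J_E$, which one consolidates into a single sum-of-squares equation to prevent a combinatorial blow-up in the count of atomic predicates. Since partial derivatives of Pfaffian functions are polynomials in the fixed chain variables, each minor is Pfaffian of degree $\poly_\cF(\max_i \deg f_i)$. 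The resulting auxiliary semi-Pfaffian set has degree $\poly_\cF(D)$ and strictly smaller dimension than $X_\sigma$, so the induction hypothesis yields a stratification of it with the desired bounds. Glueing these with the top-rank smooth stratum completes the stratification of $X_\sigma$.

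The recursion depth is bounded by $n \le \cF$, hence $\const(\cF)$; at each level the strata count multiplies by a $\poly_\cF(D)$ factor and the degree is raised to some $\poly_\cF$ power. Iterating $\const(\cF)$ times keeps both within $\poly_\cF(D)$. The main obstacle is the sum-of-squares consolidation step, which must both preserve the semi-Pfaffian nature (straightforward, using that the Pfaffian chain is closed under polynomial operations) and keep the degree at each level polynomial in the previous degree --- here it is crucial that squaring and summing $\poly_\cF(D)$ many $\poly_\cF(D)$-degree Pfaffian functions still yields a Pfaffian function of degree $\poly_\cF(D)$. A secondary concern is verifying the genuine dimension drop on the lower-rank locus; this holds because that locus is contained in the singular locus of the joint zero-set $V := \bigcap_{i \in E} \{f_i = 0\}$ intersected with the open sign conditions, and Pfaffian singular loci are of strictly smaller dimension by the Khovanskii--Rolle theory of Pfaffian varieties.
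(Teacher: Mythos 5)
This Fact is imported verbatim from \cite[Theorem~1]{gv:stratifications}; the paper offers no proof of its own, so I assess your argument directly rather than against a text proof. There are two genuine gaps.

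First, the sign-condition reduction is not polynomial as written. With $s\le D$ atomic predicates the naive count of sign patterns is $3^s\le 3^D$, which is exponential in $D$, not $\poly_\cF(D)$. The argument can be rescued, but only by invoking a Khovanskii-type bound asserting that among the $3^s$ formal sign vectors only $\poly_\cF(D)$ are actually \emph{realizable} by points of $I^n$; the trivial $3^s$ estimate does not suffice and the equality you assert is simply false.

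Second, and more seriously, the claimed dimension drop on the lower-rank locus is false. Take $n=2$, $E=\{1\}$, $f_1=x_1^2$: then $X_\sigma\subset\{x_1=0\}$, the minor condition $\partial f_1/\partial x_1=2x_1=0$ holds identically on $X_\sigma$, and your auxiliary set equals $X_\sigma$ itself, with the same dimension and comparable degree, so the recursion does not terminate, and no equation in your defining system has the nonvanishing differential required by the definition of a weak stratum. The appeal to ``Khovanskii--Rolle theory'' is not applicable here: that theory bounds numbers of solutions and connected components, and says nothing about the dimension of the scheme-theoretic singular locus cut out by Jacobian minors, which can coincide with the entire set whenever the given equations are not reduced. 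The proof in \cite{gv:stratifications} handles exactly this obstruction by passing, when the Jacobian degenerates identically, to suitable partial derivatives of the $f_i$ (which remain Pfaffian with controlled degree) so as to obtain a genuinely regular defining family; the induction is then run on a quantity that decreases under this derivative-taking, not naively on the dimension of the minor locus. Your plan is missing this ingredient, and without it the induction does not close.
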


\begin{Rem}\label{rem:stratification-family}
  We will also need a \emph{parametric version} of
  Fact~\ref{fact:stratification}: if $X\subset I^n\times I^m$ is
  semi-Pfaffian of format $\cF$ and degree $D$, then there exists a
  collection $\{S_\alpha \},\ S_\alpha \subset I^n \times I^m$ of
  semi-Pfaffian sets with their number and degree $\poly_\cF(D)$ and
  their format $\const(\cF)$ such that the following holds. For any
  $x\in I^n$, the collection $\{ S_\alpha\cap\pi_n^{-1}(x) \}$ forms a
  stratification of the fiber $X\cap\pi_n^{-1}(x)$, with
  $\dim (S_\alpha\cap\pi_n^{-1}(x))$ independent of
  $x \in \pi_n (S_\alpha)$ for each $\alpha$.  The proof of this is
  the same as the proof of Fact~\ref{fact:stratification}, treating
  the coordinates in $I^n$ as parameters and performing the
  construction in $I^m$.
\end{Rem}

We also require the following result on the complexities of the {\em
  closure} $\overline X$ and \emph{frontier}
$\partial X:=\overline X\setminus X$ of a semi-Pfaffian set $X$.

\begin{Fact}[\protect{\cite{gabrielov:closure}}]\label{fact:closure}
  Let $X\subset I^n$ be a semi-Pfaffian set of format $\cF$ and degree
  $D$. Then the closure $\overline X$ and the frontier $\partial X$
  are semi-Pfaffian of format $\const(\cF)$ and degree $\poly_\cF(D)$.
\end{Fact}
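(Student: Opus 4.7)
My plan is to deduce Fact~\ref{fact:closure} from Fact~\ref{fact:stratification} together with a curve-selection/limit argument carried out stratum by stratum. The plan is to (i) stratify $X$ into smooth pieces, (ii) describe the closure of each smooth stratum by analyzing tangent directions at candidate boundary points, and (iii) recurse on dimension.

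First I would apply Fact~\ref{fact:stratification} to decompose $X$ as a disjoint union $X=\bigcup_\alpha X_\alpha$ of smooth semi-Pfaffian strata, with the number of strata and their degrees $\poly_\cF(D)$ and their formats $\const(\cF)$. Since closure commutes with finite unions, $\overline{X}=\bigcup_\alpha\overline{X_\alpha}$, and it suffices to bound the complexity of the closure of a single smooth stratum.

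A smooth stratum $X_\alpha$ of codimension $k$ is cut out by equations $h_1=\cdots=h_k=0$ whose differentials span the conormal of $X_\alpha$, together with strict inequalities $f_1>0,\ldots,f_r>0$. The natural candidate for the closure is the ``weakening'' $Z_\alpha:=\{h_1=\cdots=h_k=0,\ f_1\ge 0,\ldots,f_r\ge 0\}$, which is already semi-Pfaffian of format $\const(\cF)$ and degree $\poly_\cF(D)$. In general $\overline{X_\alpha}\subsetneq Z_\alpha$, and one must identify which points on the boundary locus (where some $f_i$ vanishes) can actually be approached from inside $X_\alpha$. For this I would use curve selection in the Pfaffian category: $p\in Z_\alpha$ lies in $\overline{X_\alpha}$ iff there exists a real-analytic arc ending at $p$ with image in $X_\alpha$. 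Because $X_\alpha$ is cut out smoothly and transversely by the $h_j$, the existence of such an arc reduces to a local condition on the tangent cone: namely, that the linear forms $df_i|_p$ admit a common positive direction tangent to $X_\alpha$ (with the appropriate refinement when higher-order terms are needed). This is a semi-Pfaffian condition of controlled complexity in the original functions and their derivatives along the Pfaffian chain.

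To produce an honest semi-Pfaffian description of $\overline{X_\alpha}$ I would recurse on dimension: stratify the boundary locus $Z_\alpha\setminus X_\alpha$ again via Fact~\ref{fact:stratification}, and on each lower-dimensional stratum decide membership in $\overline{X_\alpha}$ by the tangent-cone criterion above. Each stage introduces $\poly_\cF(D)$ new sets of format $\const(\cF)$ and degree $\poly_\cF(D)$, and the recursion terminates in at most $n$ steps, giving the required bounds on $\overline{X}$. The frontier $\partial X=\overline{X}\setminus X$ is then a Boolean combination of sets with these complexities and inherits the same bounds.

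The main obstacle is the local step: converting the curve-selection characterization of $\overline{X_\alpha}$ into an \emph{honest} semi-Pfaffian description, rather than merely a sub-Pfaffian one obtained by existential quantification over arcs. A naive projection would destroy the polynomial dependence on $D$. What rescues the situation is the smoothness provided by Fact~\ref{fact:stratification}: on a single smooth stratum the limit behavior at a boundary point is governed by a finite-dimensional tangent-cone computation in the normal directions cut out by the $h_j$, and the complexity of this local analysis is polynomial in the data. Carrying this out carefully, including the higher-order tangent refinements needed when linear tangency fails, is the technically delicate core of the argument.
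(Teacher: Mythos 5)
This Fact is cited from \cite{gabrielov:closure}; the present paper does not reprove it, so your proposal should be judged as a self-contained alternative proof. It is not one: the step you yourself flag as ``the technically delicate core'' is not a routine technicality but essentially the whole theorem, and the mechanism you propose for it does not obviously close.

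The difficulty is concentrated in your step (ii). On a smooth stratum $X_\alpha=\{h_1=\cdots=h_k=0,\,f_1>0,\ldots,f_r>0\}$, the first-order ``common positive tangent direction'' criterion decides membership of a candidate point $p\in Z_\alpha\setminus X_\alpha$ in $\overline{X_\alpha}$ only when the relevant $df_i\vert_{T_pM}$ are nondegenerate, where $M=\{h_1=\cdots=h_k=0\}$. When some $df_i$ is tangent to $M$ at $p$, one must pass to higher-order jets, and nothing in Fact~\ref{fact:stratification} bounds the order required: that is a multiplicity bound for Pfaffian intersections, which is a nontrivial ingredient you would have to supply separately. Worse, the stratification only guarantees that $dh_1,\ldots,dh_k$ are independent along $X_\alpha$ itself, not at frontier points of $X_\alpha$; so $T_pM$ and your ``normal directions cut out by the $h_j$'' may not even be well-defined where you need them. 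Finally, the tangent cone of $X_\alpha$ at a point $p\notin X_\alpha$ is itself a limit object, so describing it semi-Pfaffianly is dangerously close to presupposing the closure result you are trying to prove. Your recursion on dimension does not break this circularity, since at each stage the same unresolved local question reappears on a lower-dimensional stratum.

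For comparison, Gabrielov's actual argument in the cited reference does not attempt a tangent-cone analysis. Instead it works with the family $\{(y,\varepsilon,x): x\in X,\ |x-y|<\varepsilon\}$, reduces the universal quantification ``for all small $\varepsilon>0$ the fiber over $(y,\varepsilon)$ is nonempty'' to a finite semi-Pfaffian condition by tracking critical points of the parameter $\varepsilon$ along the fibers, and controls the number of such critical points via Khovanskii's bounds. It is precisely this deformation/critical-point machinery, rather than smoothness of the strata, that converts a quantified (sub-Pfaffian) description of $\overline X$ into an honest semi-Pfaffian one with polynomial degree. Your outline identifies the right obstacle but does not supply the tool that overcomes it.
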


\begin{Rem}\label{rem:closure-family}
  We will also need a \emph{parametric version} of
  Fact~\ref{fact:closure}: if $X\subset I^m\times I^n$ is
  semi-Pfaffian of format $\cF$ and degree $D$ then the union of the
  fiberwise-closures
  \begin{equation}
    \{(x,y)\in I^{n+m} : y\in \overline X_x \},\quad \text{where}\quad X_x:=\{y:(x,y)\in X\}
  \end{equation}
  is semi-Pfaffian of format $\const(\cF)$ and degree
  $\poly_\cF(D)$. This follows from the proof of
  Fact~\ref{fact:closure}, see
  \cite[Remark~5.4]{gv:complexity-computations}. The same remark holds
  for the fiberwise frontier (by computing the fiberwise closure, and
  then subtracting the $X$).
\end{Rem}

We will also need a bound on the sum of Betti numbers of semi-Pfaffian
sets (in fact we will only require the zeroth Betti number, i.e. the
number of connected components). This type of bound was proved by
Khovanskii \cite{khov:book} for Pfaffian sets, and extended by Zell to
the semi-Pfaffian class. We state only the part we need, omitting the
more precise dependence on the parameters which is achieved in
\cite{zell:semi-pfaff}.

\begin{Fact}[\protect{\cite[Main
    result]{zell:semi-pfaff}}]\label{fact:semi-pfaff-betti}
  The sum of the Betti numbers of a semi-Pfaffian set of format $\cF$
  and degree $D$ does not exceed $\poly_\cF(D)$.
\end{Fact}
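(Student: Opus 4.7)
The plan is to reduce the general semi-Pfaffian case to the classical Morse-theoretic bound of Khovanskii for smooth compact Pfaffian varieties, through a combination of approximation, spectral sequences, and the slack-variable trick. Concretely, I would first replace each atomic predicate in the Boolean description of $X\subset I^k$ by a closed one: equations $f=0$ by $\abs{f}\le\delta$ and strict inequalities $f>0$ by $f\ge\varepsilon$. For generic sufficiently small parameters, standard o-minimal/semi-Pfaffian approximation arguments (in the spirit of Gabrielov--Vorobjov) show that the resulting compact set $X_{\varepsilon,\delta}\subset I^k$ is homotopy equivalent to $X$ (after a mild local-conic neighbourhood modification near $\partial I^k$), and has format and degree comparable to those of $X$. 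Thus it suffices to prove the bound for compact semi-Pfaffian sets defined by conjunctions and disjunctions of non-strict inequalities.

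Next I would bring this into a form accessible to a Morse-theoretic count. Writing a closed semi-Pfaffian set in disjunctive normal form $X=\bigcup_{i=1}^s X_i$ with each $X_i$ a basic closed set (an intersection of atomic closed conditions $f_j\ge 0$ or $g_j=0$), I would invoke a Mayer--Vietoris type spectral sequence whose $E_1^{p,q}$ term is $\bigoplus_{\abs{S}=p+1}H^q(X_S)$ with $X_S:=\bigcap_{i\in S}X_i$. Since each $X_S$ is again basic closed, and the number of nonempty index sets $S$ is at most $2^s$ with $s\le D$, it is enough to bound $\sum_k\dim H^k(X_S)$ by $\poly_\cF(D)$ for each basic closed $X_S$. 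For such a $X_S=\{\vx\in I^k:g_1=\cdots=g_r=0,\ f_1\ge0,\ldots,f_m\ge0\}$ I would introduce slack variables $u_1,\ldots,u_m$ and pass to the smooth (after generic perturbation of the defining data) variety $\~X_S=\{u_j^2=f_j(\vx),\ g_i(\vx)=0\}$, which is a finite unramified cover of $X_S$. The sum of Betti numbers of $X_S$ is then controlled by that of $\~X_S$, which is a compact smooth Pfaffian variety in $I^k\times\R^m$.

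The base case is Khovanskii's theorem: for a smooth compact variety cut out by Pfaffian equations, the sum of the Betti numbers is bounded by the number of non-degenerate critical points of a generic linear function restricted to the variety, via the Morse inequalities. Khovanskii's fewnomial-type estimate for the number of non-degenerate solutions of a system of Pfaffian equations then gives the polynomial-in-$D$ bound, for fixed chain and format. Tallying the contributions over the $2^s$ index sets $S$ in the spectral sequence and summing also over the approximation pre-processing keeps the total within $\poly_\cF(D)$, because only polynomially many operations in $D$ are performed and each step increases format by only a constant depending on $\cF$.

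The main obstacle, in my view, is the first step: justifying that $X_{\varepsilon,\delta}$ truly has the homotopy type of $X$ (or at least an upper bound on Betti numbers), uniformly and for parameters that can be chosen inside the Pfaffian class without blowing up the degree. One must verify a Hardt-style triviality along the parameter direction for generic small $(\varepsilon,\delta)$, which is already a nontrivial o-minimal input and historically the source of most of the work in \cite{zell:semi-pfaff}. After that hurdle, the Mayer--Vietoris bookkeeping and the reduction to Khovanskii's variety bound are relatively mechanical.
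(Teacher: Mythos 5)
This statement is not proved in the paper at all: it is imported verbatim (as a Fact) from Zell's paper on Betti numbers of semi-Pfaffian sets, so there is no ``paper's own proof'' to compare against. What I can compare against is Zell's actual argument, and on that score your overall architecture --- approximate the defining conditions by closed ones so as to land in the compact case, then Mayer--Vietoris the Boolean structure down to basic closed pieces, then perturb/smooth and invoke Khovanskii's Morse-theoretic count --- is indeed the right skeleton, and you have correctly flagged the homotopy-equivalence step of the approximation as the delicate part. That part is repaired in the literature by a sandwich argument that gives an inequality on Betti numbers rather than an outright homotopy equivalence.

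The genuine gap is in the combinatorial bookkeeping, and it is fatal as written. You pass to a DNF $X=\bigcup_{i=1}^s X_i$ and then sum over ``$2^s$ index sets $S$ with $s\le D$.'' Neither bound is polynomial: a Boolean formula on $D$ atoms can have exponentially many clauses in any DNF (so $s\le D$ is simply false in general), and even granting $s\le D$, the factor $2^s\le 2^D$ is exponential in $D$, directly contradicting the claimed $\poly_\cF(D)$. To make this work one must use the two standard devices that actually produce the polynomial dependence: (i) in the Mayer--Vietoris spectral sequence only the terms with $|S|\le \dim X+1\le n+1$ can contribute (deeper intersections vanish for dimension reasons after putting things in general position), giving $O(s^{n+1})$ rather than $2^s$ terms; and (ii) the number of realizable sign conditions of $D$ Pfaffian functions on $I^k$ is itself $\poly_\cF(D)$ by a Warren/Gabrielov--Vorobjov-type count, which controls $s$ and the number of basic pieces without ever invoking a literal DNF expansion. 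These are exactly the points where Zell's proof does real work, and your sketch does not supply them. A secondary, reparable slip: the slack-variable cover $\~X_S\to X_S$ with $u_j^2=f_j$ is a \emph{branched} (ramified) cover, not an unramified one --- it ramifies along $\{f_j=0\}$ --- and the inequality $\sum_i b_i(X_S;\Q)\le\sum_i b_i(\~X_S;\Q)$ should be justified by the transfer for the $(\Z/2)^m$-quotient rather than by covering-space theory.
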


\section{Cell decomposition of sub-Pfaffian sets}
\label{sec:semi-Pfaff-cells}

In this section we prove a result on cell decomposition of
sub-Pfaffian sets, that is the main ingredient of the proof of
Theorem~\ref{thm:main}. As a shorthand, if $\Delta$ is a collection of
sets we write $\pi_k(\Delta)$ for the collection of the projections of
the elements of $\Delta$.  Furthermore, $\bigcup \Delta$ will stand
for the union of sets in $\Delta$.  Thus, $\bigcup \pi_k (\Delta)$ is
the union of projections of sets in $\Delta$.  We will freely use the
results on effective bounds for stratifications and frontiers
from~\secref{sec:pfaff-background} without explicit reference.

\begin{Lem}[Effective fiber cutting]\label{lem:fiber-cut}
  Let $X\subset I^{n+m}$ be a semi-Pfaffian set of format $\cF$ and
  degree $D$. Then there exists a semi-Pfaffian set $\widehat X$ of
  format $\const(\cF)$ and degree $\poly_\cF(D)$ such that
  $\pi_n(X)=\pi_n(\widehat X)$ and $\pi_n\rest{\widehat X}$ has
  zero-dimensional fibers.
\end{Lem}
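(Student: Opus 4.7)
The plan is to induct on $\dim X$. The base case $\dim X=0$ is immediate, since $X$ is then a finite set of points and $\widehat X:=X$ already satisfies the conclusion. For the inductive step, I would first apply the parametric stratification from Remark~\ref{rem:stratification-family}, refined once more by Fact~\ref{fact:stratification} applied to auxiliary semi-Pfaffian rank loci, to obtain a decomposition $X=\bigsqcup_\beta T_\beta$ into smooth semi-Pfaffian strata of format $\const(\cF)$ and total degree $\poly_\cF(D)$ with two extra properties: the fiber dimension $r_\beta:=\dim(T_\beta\cap\pi_n^{-1}(x))$ is independent of $x\in\pi_n(T_\beta)$, and for each $i=1,\ldots,m$ the rank of $dy_i\rest{T_pF}$ (with $F=T_\beta\cap\pi_n^{-1}(\pi_n(p))$) is constant, equal to $0$ or $1$, throughout $T_\beta$. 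Since this rank condition is a Jacobian rank condition on the equations defining $T_\beta$ together with $x_1,\ldots,x_n$, the refinement is semi-Pfaffian of controlled complexity. Strata with $r_\beta=0$ are placed directly into $\widehat X$.

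The key geometric input concerns a stratum $T_\beta$ with $r_\beta>0$. Since $T_pF\ne 0$ everywhere, some $dy_i\rest{T_pF}$ must be nonzero, so by constant rank there is a fixed index $i(\beta)$ for which $dy_{i(\beta)}\rest{T_pF}\ne 0$ throughout $T_\beta$. The function $y_{i(\beta)}$ then has no critical point on any fiber $F_x$, so its minimum over the compact closure $\overline{F_x}$ must lie in $\overline{F_x}\setminus F_x\subseteq\partial T_\beta\cap\pi_n^{-1}(x)$, yielding $\pi_n(T_\beta)\subseteq\pi_n(\partial T_\beta)$. By Fact~\ref{fact:closure}, $\partial T_\beta$ is semi-Pfaffian of format $\const(\cF)$ and degree $\poly_\cF(D)$, and $\dim\partial T_\beta<\dim T_\beta\le\dim X$. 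Replacing each such $T_\beta$ by $\partial T_\beta$ strictly reduces the total dimension while preserving the $\pi_n$-image, so the inductive hypothesis applies to produce a fiber-cut of the union of frontiers; its union with the $r_\beta=0$ strata is the desired $\widehat X$.

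The recursion depth is bounded by $\dim X\le n+m\le\cF$, and both $\const$ and $\poly_\cF$ compose to themselves under a bounded-in-$\cF$ number of iterations (the implied function may change but remains universal in $\cF$), so the final bounds are as claimed. The main technical obstacle is the preparatory refinement: stratifying simultaneously with respect to the rank of $dy_i\rest{TF}$ for every $i=1,\ldots,m$ must produce only $\poly_\cF(D)$ pieces of format $\const(\cF)$. This is where iterated applications of Fact~\ref{fact:stratification} must be organized with care, exploiting that $m\le\cF$ so that even a cost like $2^m$ arising from partitioning by the rank pattern across all coordinates is absorbed into $\const(\cF)$.
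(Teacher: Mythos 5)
Your argument has the right geometric ingredients (stratification, critical points of the coordinate functions $y_i$ on the fibers, frontiers, and a dimension recursion), but there is a genuine gap at the step where you replace $T_\beta$ by its frontier. The lemma demands the \emph{equality} $\pi_n(X)=\pi_n(\widehat X)$, and you use the ambient frontier $\partial T_\beta=\overline{T_\beta}\setminus T_\beta$ from Fact~\ref{fact:closure}. Your minimum argument correctly shows $\pi_n(T_\beta)\subseteq\pi_n(\partial T_\beta)$, but the reverse inclusion $\pi_n(\partial T_\beta)\subseteq\pi_n(X)$ is false in general: $\partial T_\beta$ lives over all of $\overline{\pi_n(T_\beta)}$, not just over $\pi_n(T_\beta)$. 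For example, for $T_\beta=\{(x,y)\in[0,1]^2: x>0,\ 0<y<x\}$ with $n=m=1$ one has $\pi_1(T_\beta)=(0,1]$ while $\pi_1(\partial T_\beta)=[0,1]$, so your $\widehat X$ would acquire the spurious point $x=0$. The repair is to use the \emph{fiberwise} frontier of Remark~\ref{rem:closure-family}, i.e.\ $\bigcup_x\bigl(\overline{F_x}\setminus F_x\bigr)$ with closures taken inside the fibers: this is exactly the set your minimum argument lands in, it is semi-Pfaffian of controlled complexity, it projects \emph{into} $\pi_n(T_\beta)$ (it is empty over $x\notin\pi_n(T_\beta)$), and its dimension still drops. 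This is precisely what the paper's proof does.

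The second weak point is the preparatory refinement, which you flag but do not resolve, and which is more delicate than a bookkeeping issue. The condition ``$dy_i\rest{T_pF}$ has constant rank'' refers to the fiber $F$ of the stratum itself; once you partition $T_\beta$ by the loci $\{dy_i\rest{T_pF}=0\}$ and re-stratify, the fibers (hence their tangent spaces) change, the rank of $dy_i$ on the new, smaller fiber tangent spaces can drop where it was previously nonzero, and it is not clear that iterating this stabilizes after a bounded number of rounds with $\poly_\cF(D)$ pieces. The paper's proof avoids demanding critical-point-free strata altogether: it keeps the top-fiber-dimensional strata as they are, adds to $X'$ the fiberwise frontier \emph{and} the fiberwise critical loci $C_{\alpha,j}$ of each coordinate (re-stratified, keeping the strata of lower fiber dimension), and argues pointwise over each $x\in\pi_n(X)$ that at least one of these catches the fiber: either a stratum of the fiber already has lower dimension, or it has nonempty fiberwise frontier, or it is a compact manifold on which some $y_j$ is non-constant and hence has critical points. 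Recursion is then on the maximal fiber dimension rather than on $\dim X$. If you adopt the fiberwise frontier and replace your constant-rank refinement by this ``add the critical loci'' device, your argument becomes essentially the paper's.
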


\begin{proof}
  We proceed by induction on the maximal fiber dimension $k$ of
  $\pi_n\rest{X}$. If $k=0$ we are done, and otherwise we will
  construct a set $X'$ with $\pi_n(X)=\pi_n(X')$ and maximal fiber
  dimension smaller then $k$.

  Let $\{S_\alpha \},\ S_\alpha \subset I^{n+m}$ denote a fiberwise
  stratification of $X$, as in Remark~\ref{rem:stratification-family}.
  For any $S_\alpha$ with fiber dimension smaller then $k$, we put
  $S_\alpha$ into $X'$. For any $S_\alpha$ with fiber dimension $k$ we
  do the following.
  \begin{enumerate}
  \item We put the fiberwise frontier of $S_\alpha$ into $X'$ using
    Remark~\ref{rem:closure-family}.
  \item For any $j=1,\ldots,m$ let $C_{\alpha,j}$ be the set of
    fiberwise critical points of the coordinate function $x_{n+j}$ on
    the fibers of $\pi_n\rest{S_\alpha}$. Then we stratify the fibers
    of $\pi_n\rest{C_{\alpha,j}}$ again by
    Remark~\ref{rem:stratification-family} and put all strata of
    fiberwise dimension smaller than $k$ into $X'$.
  \end{enumerate}

  Clearly $\pi_n(X')\subset\pi_n(X)$. To prove the converse, let
  $x\in\pi_n(X)$. If the fiber of $X$ over $x$ has strata of dimension
  less then $k$, or strata of dimension $k$ with non-empty frontier,
  we are done. Otherwise this fiber consists of smooth compact
  $k$-dimensional manifolds. Then one of the coordinate functions
  $x_{n+j}$ must be non-constant on this fiber, and the corresponding
  set $C_{\alpha,j}$ is non-empty and locally closed in the
  fiber. Then the fiberwise stratification of $C_{\alpha,j}$ must
  contain strata of dimension less than $k$ that we put into
  $X'$. This proves the claim.
\end{proof}

The following proposition is quite similar to Theorem~\ref{thm:main}:
the difference is that here we essentially assume that we are given
sub-Pfaffian sets with bounded format and degree (rather than their
*-analogs), but produce cell decompositions with bounded *-format and
*-degree. In~\secref{sec:proof-main} we show how to deduce the general
case of Theorem~\ref{thm:main} from this statement.

\begin{Prop}\label{prop:main}
  Let $\{X_\alpha\},\ X_\alpha \subset I^\ell$, where $\ell \ge n$, be
  a collection of $N$ semi-Pfaffian sets of format $\cF$ and degree
  $D$. Then there exists a sub-Pfaffian cell decomposition of $I^n$
  compatible with each $\pi_n(X_\alpha)$ such that the number of cells
  is $\poly_\cF(N,D)$, their *-format is $\const(\cF)$ and their
  *-degree is $\poly_{\cF}(D)$.
\end{Prop}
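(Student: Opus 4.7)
The plan is to proceed by induction on $n$, using Lemma~\ref{lem:fiber-cut} as the main reduction and the parametric stratification and closure results of \secref{sec:pfaff-background} to track the relevant critical data. First I apply Lemma~\ref{lem:fiber-cut} to each $X_\alpha\subset I^\ell$ with respect to $\pi_n:I^\ell\to I^n$ to produce a semi-Pfaffian $\widehat X_\alpha\subset I^\ell$ of format $\const(\cF)$ and degree $\poly_\cF(D)$ with $\pi_n(\widehat X_\alpha)=\pi_n(X_\alpha)$ and zero-dimensional $\pi_n$-fibers. By Fact~\ref{fact:semi-pfaff-betti} each $\widehat X_\alpha$ has $\poly_\cF(D)$ connected components $\widehat X_\alpha^{(i)}$, so $\pi_n(X_\alpha)=\bigcup_i \pi_n(\widehat X_\alpha^{(i)})$ is automatically presented as a union of projections of connected components in the sense of Definition~\ref{def:format}. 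For the base case $n=1$, each $\pi_1(\widehat X_\alpha^{(i)})$ is connected in $I$, hence a point or an interval; its two endpoints can be exhibited as projections of suitable connected components of the semi-Pfaffian frontier $\partial\widehat X_\alpha$ (Fact~\ref{fact:closure}), and together with $\{0,1\}$ they give $\poly_\cF(N,D)$ points that split $I$ into the required cells of *-format $\const(\cF)$ and *-degree $\poly_\cF(D)$.

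For the inductive step, assume the result for $n-1$. The strategy is to decompose $I^{n-1}$ so finely that above each $(n-1)$-dimensional cell the branches of $\pi_n|_{\widehat X_\alpha}$ become finitely many pairwise non-crossing continuous functions into $I$ with combinatorial labeling constant on that cell. To force this I would augment $\{\widehat X_\alpha\}$ with an auxiliary family $\{T_\gamma\}$ of semi-Pfaffian sets of format $\const(\cF)$ and total degree $\poly_\cF(D)$ comprising: the fiberwise frontiers of each $\widehat X_\alpha$ with respect to $\pi_{n-1}$ (Remark~\ref{rem:closure-family}); the fiberwise critical loci of the coordinate $x_n$ on each stratum of the parametric stratification of $\widehat X_\alpha$ (Remark~\ref{rem:stratification-family}); and the fiber-product loci $\{(y,z)\in\widehat X_\alpha\times\widehat X_\beta:\pi_n(y)=\pi_n(z)\}\subset I^{2\ell}$ which encode the pairwise coincidences of distinct branches under $\pi_n$. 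Applying the inductive hypothesis to $\{\widehat X_\alpha\}\cup\{T_\gamma\}$ (projected to $I^{n-1}$) yields a sub-Pfaffian cell decomposition $\cD_{n-1}$ of $I^{n-1}$ of the advertised complexity, above each cell $C$ of which the desired cylindrical structure holds.

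The cells of $I^n$ are then taken to be the graphs of the branches and the open bands between consecutive branches (with the obvious top and bottom bands), for $C$ ranging over $\cD_{n-1}$. Writing $C$ as a union of projections $\pi_{n-1}(W_\beta^\circ)$ of connected components of semi-Pfaffian witnesses $W_\beta$ as in Definition~\ref{def:format}, each graph cell is the $\pi_n$-image of a single connected component of the fiber-product $\{(w,y)\in W_\beta\times\widehat X_\alpha:\pi_{n-1}(w)=\pi_{n-1}(y)\}$, which is semi-Pfaffian of format $\const(\cF)$ and degree $\poly_\cF(D)$; the band cells arise analogously from connected components of a closely related semi-Pfaffian set in which the $n$-th coordinate is decoupled and bounded by inequality constraints coming from two adjacent branches. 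This produces cells of *-format $\const(\cF)$ and *-degree $\poly_\cF(D)$, with $\poly_\cF(N,D)$ cells in total.

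The principal technical difficulty, as emphasized in the introduction, is the no-gluing property: operations such as closure, frontier and intersection can merge connected components that were originally distinct, destroying the correspondence between the connected components of the semi-Pfaffian witnesses and the geometric cells and thereby forcing the *-format to depend on $D$. Including the fiber-product coincidence loci and the fiberwise frontiers among the $T_\gamma$'s is precisely what prevents such accidental gluing above each $C\in\cD_{n-1}$; careful verification that every lift respects this separation --- and that the band-cell construction can be arranged without implicitly requiring connected components of semi-Pfaffian sets whose format already depends on $D$ --- is where I expect the bulk of the technical work to concentrate.
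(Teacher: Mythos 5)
The overall architecture is right, and several key ingredients match the paper's proof: fiber cutting via Lemma~\ref{lem:fiber-cut}, an auxiliary family of ``bad'' sets built from frontiers, critical loci and coincidence loci, and --- importantly --- the realization of each graph cell as $\pi_n$ of a \emph{single connected component} of the fiber product $\{(w,y)\in W_\beta\times\widehat X_\alpha:\pi_{n-1}(w)=\pi_{n-1}(y)\}$, which is exactly how the paper keeps the *-format independent of $D$. But there is a genuine gap in the induction. You induct on $n$ alone and assert that above \emph{each} cell $C$ of $\cD_{n-1}$ the branches form finitely many non-crossing continuous sections. Compatibility of $\cD_{n-1}$ with the projections of the $T_\gamma$ only tells you that each cell is contained in or disjoint from those projections; for cells \emph{contained} in them (e.g.\ cells lying inside $\pi_{n-1}$ of the critical locus, or inside the locus where $\pi_{n-1}\rest{\widehat X_\alpha}$ drops rank --- think of a piece of $\widehat X_\alpha$ projecting to a single point of $I^{n-1}$ with a one-dimensional fiber over it) the fiber of $\pi_n(\widehat X_\alpha)$ over a point of $C$ can be infinite, and the graphs-and-bands construction breaks down. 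The paper resolves this with a \emph{lexicographic} induction on $(n,k)$, where $k=\max_\alpha\dim\pi_{n-1}(X_\alpha)$: the bad set $\Sigma$ is arranged so that $\dim\bigcup\pi_{n-1}(\Sigma)<k$, the graphs-and-bands construction is carried out only over cells in the good region $G$, and the part of $I^n$ lying over $\bigcup\pi_{n-1}(\Sigma)$ is handled by a second recursive call with strictly smaller $k$. Without some analogue of this second recursion your proof has no mechanism for producing cells over the degenerate locus.

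Two further points. First, your coincidence locus $\{(y,z)\in\widehat X_\alpha\times\widehat X_\beta:\pi_n(y)=\pi_n(z)\}$ projects onto an essentially full-dimensional subset of $I^{n-1}$ (wherever two branches merely coexist or coincide identically), so compatibility with its projection does not isolate the genuine crossing points; the paper instead stratifies $Z_{\alpha,\beta}$ and puts into $\Sigma$ only the closures of strata of dimension at most $k-1$ together with the frontiers of the $k$-dimensional strata, which is what makes $\dim\bigcup\pi_{n-1}(\Sigma)<k$ and forces the trichotomy $s_\alpha<s_\beta$, $s_\alpha=s_\beta$, $s_\alpha>s_\beta$ uniformly over a good cell. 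Second, after fiber cutting for $\pi_n$ the set $\pi_n(\widehat X_\alpha)$ can still have dimension one more than $\pi_{n-1}(\widehat X_\alpha)$ (the paper's $\Pi_{k+1}$); over a good cell $C$ such a set is an \emph{open} subset of $C\times I$ rather than a union of graphs, and one must reduce compatibility with it to compatibility with its frontier, which in turn requires the extra fiber-cutting step for $\pi_{n-1}$ giving~\eqref{eq:Pi-decomp}. Your proposal does not address this case, and your closing paragraph defers precisely the verifications where these issues live.
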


\begin{proof}
  We will work by lexicographic induction on $(n,k)$ where
  $$k:=\max_\alpha\dim\pi_{n-1}(X_\alpha).$$
  By Lemma~\ref{lem:fiber-cut} we may assume that
  $\pi_n\rest{X_\alpha}$ has zero-dimensional fibers. Refining each
  $X_\alpha$ into its stratification, we may further assume without
  loss of generality that $X_\alpha$ is smooth, that
  $\pi_n\rest{X_\alpha}$ has rank $\dim\pi_n(X_\alpha)$, and that
  $\pi_{n-1}\rest{X_\alpha}$ has constant rank. Let
  $\Pi :=\{X_\alpha\}$. Using Fact~\ref{fact:closure} we may also
  assume that $\Pi$ is closed under taking frontiers.

  Let $\Pi_k$ (resp. $\Pi_{k+1}$ and $\Pi_{<k}$) denote the collection
  of sets $X_\alpha$ with $\dim X_\alpha=k$ (resp. $\dim X_\alpha=k+1$
  and $\dim X_\alpha<k$).

  For each $X_\alpha \in\Pi_{k+1}$ we also apply
  Lemma~\ref{lem:fiber-cut} to $\pi_{n-1}\rest{X_\alpha}$ and add the
  resulting sets to $\Pi$ (after stratifying as above). With this
  modification we may assume that
  \begin{equation}\label{eq:Pi-decomp}
    \pi_{n-1}(\Pi_{k+1})\subset\pi_{n-1}(\Pi_k\cup\Pi_{<k}).
  \end{equation}

  We construct a set $\Sigma$ of closed semi-Pfaffian sets of bounded
  *-format and *-degree, satisfying
  $\dim\bigcup \pi_{n-1} (\Sigma)<k$. First, we put the closures of
  the sets in $\Pi_{<k}$ into $\Sigma$.

  Next, we wish to ensure that
  $G:=\bigcup \pi_{n-1}(\Pi)\setminus \bigcup \pi_{n-1}(\Sigma)$ is
  smooth, and that for every $X_\alpha \in\Pi_{k+1}$ the intersection
  $\pi_n(X_\alpha)\cap (G\times I)$ is open in
  $G\times I$. By~\eqref{eq:Pi-decomp} and since we already put
  $\Pi_{<k}$ in $\Sigma$ we have
  $G=\bigcup \pi_{n-1}(\Pi_k)\setminus \bigcup \pi_{n-1}(\Sigma)$.
  Since $\pi_{n-1}\rest X_\alpha$ has constant rank $k$ for every
  $X_\alpha\in\Pi_k$, the set $\bigcup \pi_{n-1}(\Pi_k)$ can be
  thought of as an immersed smooth manifold with possible
  self-intersections. It may be non-smooth only in points
  $p\in \bigcup \pi_{n-1}(\Pi_k)$ where:
  \begin{enumerate}
  \item there exist two points $p_\alpha\in X_\alpha\in\Pi_k$ and
    $p_\beta\in X_\beta\in\Pi_k$ such that
    $p=\pi_{n-1}(p_\alpha)=\pi_{n-1}(p_\beta)$ but the projections of
    the germs $(X_\alpha,p_\alpha)$ and $(X_\beta,p_\beta)$ under
    $\pi_{n-1}$ (which are both smooth manifolds) are different;
  \item the point $p$ is also in
    $\bigcup \pi_{n-1}(\{ \partial X_\alpha |\ X_\alpha \in \Pi_k \})$.
  \end{enumerate}
  We show how to put the points corresponding to these two cases in
  $\Sigma$, and it then follows that $G$ is smooth. The second case is
  rather simple: for any $X_\alpha\in\Pi_{<k}\cup\Pi_k$ we add the
  frontier $\partial X_\alpha$ to $\Sigma$. We also note for later
  purposes that, since the sets $X_\alpha$ are bounded, this implies
  \begin{equation}\label{eq:good-boundary}
    \partial\pi_n(X_\alpha)\subset \bigcup \pi_n (\Sigma).
  \end{equation}
  To handle the first case, consider for every pair
  $X_\alpha,X_\beta\subset I^\ell$ in $\Pi_k$ the set
  \begin{equation}\label{eq:product}
    \{ (x,y) \in X_\alpha \times X_\beta:\ x_1=y_1, \ldots ,x_{n-1}=y_{n-1} \}
    \subset I^\ell\times I^\ell
  \end{equation}
  where $x$ denotes the coordinates on the first $I^\ell$ and $y$ on
  the second.  In what follows we will be routinely adjusting the
  dimension $\ell$ so that the set $I^\ell$ contains the new
  semi-Pfaffian sets we create, in this case the set in
  (\ref{eq:product}).  We stratify this set and put (the closure of)
  every stratum of dimension less than $k$ into $\Sigma$. It is clear
  that these strata cover the points described in the first case. Thus
  we established that $G$ is smooth.

  For $X_\alpha\in\Pi_{k+1}$ we know that $\pi_n\rest{X_\alpha}$ has
  constant rank $k+1$ and $\pi_{n-1}\rest{X_\alpha}$ has constant rank
  $k$. Therefore, the projection
  $X_\alpha\cap\pi_{n-1}^{-1}(G)\to G\times I$ is a submersion, and
  indeed $\pi_n(X_\alpha)\cap(G\times I)$ is open in $G\times I$.

  For later purposes we put one more set into $\Sigma$, controlling
  the possible interactions between different elements of $\Pi_k$. For
  any $X_\alpha,X_\beta\in\Pi_k$ we consider the set
  \begin{equation}\label{eq:Z-def}
    Z_{\alpha,\beta}:=
    (\{ (x,y) \in X_\alpha \times X_\beta:\ x_1=y_1, \ldots ,x_{n}=y_{n} \}
    \subset I^\ell\times I^\ell.
  \end{equation}
  We stratify $Z_{\alpha,\beta}$ and add the closures of the strata of
  dimension at most $k-1$, and the frontier of the strata of dimension
  $k$, to $\Sigma$.

  Now apply induction to obtain a cell decomposition of $I^{n-1}$
  compatible with $\pi_{n-1}(\Pi)$ and with $\pi_{n-1}(\Sigma)$. If a
  cell $C$ is disjoint from $\bigcup \pi_{n-1}(\Pi)$ then $C\times I$
  is a cell compatible with every $\pi_n(X_\alpha)$.

  Let $C$ be a cell contained in
  $G= \bigcup \pi_{n-1}(\Pi)\setminus \bigcup \pi_{n-1}(\Sigma)$. We
  will show how to construct cells over $C$ that are compatible with
  every $\pi_n(X_\alpha)$. First consider
  $X_\alpha\in\Pi_{k+1}$. Recall that
  $\pi_n(X_\alpha)\cap (G\times I)$ is open in $G\times I$. It follows
  that its boundary in $G\times I$ agrees with its frontier. If a cell
  in $C\times I$ is compatible with the boundary of $\pi_n(X_\alpha)$
  (or, equivalently in this case, with the frontier) then it is also
  compatible with $\pi_n(X_\alpha)$ by elementary topology. Since we
  assume that $\Pi$ is closed under taking frontiers, it will be
  enough to construct cells over $C$ compatible with $\pi_n(\Pi_k)$.

  We claim that for any $X_\alpha\in\Pi_k$, the set $\pi_n(X_\alpha)$
  is a union of graph cells over $C$. Consider an arbitrary point
  $c_0\in C$. Then the fiber of $\pi_n(X_\alpha)$ over $c_0$ consists
  of finitely many points. By construction, $\pi_n(X_\alpha)$ maps
  submersively to $C$, so as we move $c_0$ these points locally move
  continuously. Moreover the points must remain in $\pi_n(X_\alpha)$
  as we continue globally to $C$, for otherwise one of these points
  would meet $\partial\pi_n(X_\alpha)$
  contradicting~\eqref{eq:good-boundary} since $C\subset G$ is
  disjoint from $\pi_{n-1}(\Sigma)$. Note that for the same reasons
  $X_\alpha$ itself is also a union of graphs of continuous maps over
  $C$. If $s:C\to I$ is such a section of $\pi_n(X_\alpha)$ we denote
  by $\hat s$ the corresponding extension to a section of $X_\alpha$.

  If $s_\alpha,s_\beta:C\to I$ is a pair of such sections obtained
  from $X_\alpha,X_\beta$ then one of
  \begin{align}
    s_\alpha &< s_\beta   &   s_\alpha &=s_\beta   &   s_\alpha&>s_\beta
  \end{align}
  holds uniformly over $C$. Indeed, otherwise there should be a point
  $c_0\in C$ where $s_\alpha(c_0)=s_\beta(c_0)$ and a neighborhood
  where this does not hold identically. This implies that
  $(c_0,c_\alpha(c_0))$ belongs to a stratum of dimension at most
  $(k-1)$, or to the frontier of a $k$-dimensional stratum of
  $Z_{\alpha,\beta}$, and hence $c_0\not\in G$ contradicting
  $C\subset G$.

  By the above, the set of sections obtained from any of the
  $X_\alpha\in\Pi_k$ is
  $$\{s_1<\cdots<s_q\},$$
  where we may suppose for simplicity that $s_1=0$ and $s_q=1$. Also
  note since $\pi_n(X_\alpha)$ is a union of disjoint graphs of
  continuous functions, their number is bounded by the number of
  connected components of $X_\alpha$. By
  Fact~\ref{fact:semi-pfaff-betti} we therefore get at most
  $\poly_\cF(D)$ sections from each $X_\alpha$, and in total
  $q=\poly_F(N,D)$.

  A cell decomposition of $C\times I$, compatible with each
  $X_\alpha$, is given by the cells
  \begin{multline}
    \{x_n=s_1(x_1,\ldots,x_{n-1})\}, \\
    \{s_1(x_1,\ldots,x_{n-1})<x_n<s_2(x_1,\ldots,x_{n-1})\},\\
    \ldots, \{x_n=s_q(x_1,\ldots,x_{n-1})\}.
  \end{multline}
  We show that each of these cells is a sub-Pfaffian set with
  appropriately bounded *-format and *-degree. Specifically we show
  that the graph of each section $s_j$ over $C$ is a sub-Pfaffian set
  with the appropriate *-bounds, and it is then a simple exercise to
  construct each of the cells above (where for the interval cells one
  repeats the construction for $s_j,s_{j+1}$ and works in the direct
  product).

  Recall that $C=\pi^\ell_n(Z^\circ)$ is the projection of the
  connected component $Z^\circ$ of some semi-Pfaffian set
  $Z\subset I^\ell$ (for an appropriate $\ell$). Suppose that $s_j$ is
  a section corresponding to $X_\alpha$. Then the graph of $s_j$ is a
  connected component of the set $\pi_n(X_\alpha)\cap(C\times I)$. In
  fact, since the section $s_j$ extends to a section of $X_\alpha$,
  the graph of $s_j$ the projection $\pi_n(W_j^\circ)$ of a connected
  component $W_j^\circ$ of the set
  \begin{equation}
    W_j := \{ (x,y) \in Z \times X_\alpha : x_1=y_1, \ldots ,x_{n-1}=y_{n-1} \}
  \end{equation}
  given by $Z^\circ$ in the $x$-coordinates and by the graph of
  $y=\hat s_j(x)$. Here we ordered the coordinates in such a way that
  $\pi_n(x,y)=(y_1,\ldots,y_n)$. This proves that the graph of $s_j$
  is indeed sub-Pfaffian with appropriately bounded *-format and
  *-degree. Since each cell is constructed using one or two of these
  graphs, their *-degree is indeed bounded by $\poly_\cF(D)$ (noting
  in particular that there is no dependence on $N$).

  Finally, we must construct cells covering
  $\bigcup \pi_{n-1}(\Sigma)\times I$ and compatible with every
  $\pi_n(X_\alpha)$. Let
  \begin{equation}
    Y_\alpha:=\left(\bigcup \pi_{n-1} (\Sigma )\times I \right)\cap\pi_n(X_\alpha) = \pi_n(\widehat Y_\alpha),
  \end{equation}
  where it is easy to choose $\widehat Y_\alpha$ to be a semi-Pfaffian
  set of bounded complexity. It will suffice to construct a cell
  decomposition of $I^n$ compatible with
  $\bigcup \pi_n(\Sigma)\times I$ and $\{Y_\alpha\}$ and take from it
  only the cells covering $\bigcup \pi_n(\Sigma )\times I$. This can
  now be achieved by induction on $k$, noting that for any $Z$ in this
  collection $\pi_{n-1}(Z)\subset \pi_{n-1}( \Sigma)$ has dimension
  strictly less than $k$.
\end{proof}

\section{Proofs of the main results}
\label{sec:proofs}

\subsection{Proof of Theorem~\ref{thm:main}}
\label{sec:proof-main}

We will require a couple of elementary lemmas, whose proofs are left
an exercises for the reader.

\begin{Lem}
  Suppose that a cell $C$ is compatible with a set $X$. Then it is
  compatible with each connected component of $X$.
\end{Lem}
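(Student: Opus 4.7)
The plan is very short because the statement reduces immediately to the fact that a cell is connected. Recall from the definition that every cylindrical $\ell$-cell is homeomorphic to an open $\ell$-dimensional ball, hence in particular $C$ is connected.

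First I would split on the two cases given by compatibility. If $C \cap X = \emptyset$, then for every connected component $X^\circ$ of $X$ we have $X^\circ \subset X$, so $C \cap X^\circ \subset C \cap X = \emptyset$, and compatibility of $C$ with $X^\circ$ holds in the empty-intersection clause.

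If instead $C \subset X$, then $C$ is a connected subset of $X$, so $C$ lies entirely inside a single connected component $X^\circ_0$ of $X$. For that component $C \subset X^\circ_0$ gives compatibility via the containment clause, while for any other component $X^\circ \ne X^\circ_0$ we have $C \cap X^\circ \subset X^\circ_0 \cap X^\circ = \emptyset$, again giving compatibility. This exhausts the cases, so $C$ is compatible with every connected component of $X$.

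There is no real obstacle here; the only ingredient beyond set-theoretic bookkeeping is the remark recorded right after the definition of cylindrical cell, namely that each $\ell$-cell is homeomorphic to an open ball and hence connected. Everything else is a one-line argument about how a connected subset of a space meets at most one connected component.
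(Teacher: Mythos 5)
Your proof is correct; the paper leaves this lemma as an exercise, and your argument — that a cylindrical cell is homeomorphic to an open ball, hence connected, and therefore meets at most one connected component of $X$ — is exactly the intended one-line justification. No issues.
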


\begin{Lem}
  Suppose that a cell decomposition of $I^k$ is compatible with a set
  $A$. Then the induced decomposition on $I^n$ is compatible with
  $\pi^k_n(A)$.
\end{Lem}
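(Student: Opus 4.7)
The plan is to unwind the definitions and reduce everything to an easy point-chasing argument. Write $\cD$ for the given cell decomposition of $I^k$, and let $\cD_n$ denote the induced cell decomposition on $I^n$ obtained by iterating the defining property that $\pi^i_{i-1}$ sends cells of the decomposition at level $i$ to cells of the decomposition at level $i-1$. In particular every cell of $\cD_n$ is of the form $\pi^k_n(C)$ for some $C\in\cD$, and since the cells of $\cD_n$ are pairwise disjoint, any two such projections $\pi^k_n(C_1)$ and $\pi^k_n(C_2)$ are either identical or disjoint.

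To prove compatibility of $\cD_n$ with $\pi^k_n(A)$, I would fix a cell $C'\in\cD_n$ and assume $C'\cap\pi^k_n(A)\ne\emptyset$, aiming to conclude $C'\subset\pi^k_n(A)$. Picking any $y\in C'\cap\pi^k_n(A)$, I would lift to a point $x\in A$ with $\pi^k_n(x)=y$, and let $C_x\in\cD$ be the unique cell containing $x$. Since $\cD$ is compatible with $A$ and $x\in C_x\cap A$, compatibility forces $C_x\subset A$, and hence $\pi^k_n(C_x)\subset\pi^k_n(A)$.

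To close the argument, I need the identification $\pi^k_n(C_x)=C'$. Both are cells of $\cD_n$ containing the common point $y=\pi^k_n(x)$, so by the disjointness observation of the first paragraph they must coincide. Consequently $C'=\pi^k_n(C_x)\subset\pi^k_n(A)$, which completes the proof.

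The statement is elementary and I do not expect any real obstacle. The only point requiring a moment's thought is the matching $\pi^k_n(C_x)=C'$, which is handled cleanly by the disjointness of distinct cells in a cylindrical cell decomposition; all remaining steps are direct applications of the definitions of projection, compatibility, and the inductive construction of the induced decomposition.
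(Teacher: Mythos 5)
Your argument is correct and is exactly the standard point-chasing proof one would expect; the paper explicitly leaves this lemma as an exercise, so there is no distinct ``paper proof'' to diverge from. The one step that carries actual content---that $\pi^k_n(C_x)$ and $C'$, being cells of $\cD_n$ sharing the point $y$, must coincide by pairwise disjointness---is handled cleanly, and your preliminary observation that every cell of $\cD_n$ arises as $\pi^k_n(C)$ for some $C\in\cD$ (because the projections of cells of $\cD$ are cells of $\cD_n$ and already cover all of $I^n$) is the right justification. No gaps.
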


We are now ready to finish the proof.  Let
$X_{\alpha,\beta}\subset I^k$ denote semi-Pfaffian sets and
$X_{\alpha,\beta}^\circ$ connected components such that
$Y_\alpha=\bigcup_\beta\pi^k_n(X_{\alpha,\beta}^\circ)$. Use
Proposition~\ref{prop:main} to find a sub-Pfaffian cell decomposition
of $I^k$ compatible with $\{X_{\alpha,\beta}\}$ with suitably bounded
*-format and *-degree. Then by the preceding lemmas this cell
decomposition is compatible with $X_{\alpha,\beta}^\circ$, and the
induced decomposition on $I^n$ is therefore compatible with
$Y_\alpha$.

\begin{Rem}
  Note that in the proof of Theorem~\ref{thm:main}, even though we are
  interested in a cell decomposition of $I^n$, we apply
  Proposition~\ref{prop:main} to obtain a cell decomposition of the
  full space $I^k$. This allows us to ensure the compatibility with
  the projections of each connected component $X_{\alpha,\beta}^\circ$
  separately. Applying Proposition~\ref{prop:main} to the projections
  $\pi_n(X_{\alpha,\beta})$ would not suffice. It is therefore crucial
  at this point that Proposition~\ref{prop:main} produces a cell
  decomposition in the original coordinates, without applying a linear
  transformation (or at least preserving the projection
  $\pi^k_n$). For this reason, the cell decomposition algorithm of
  Gabrielov-Vorobjov \cite{gv:cell-decomposition}, which applies such
  a linear transformation to the coordinates, would not suffice for
  our purposes.
\end{Rem}

\subsection{Proof of Theorem~\ref{thm:formulas}}
\label{sec:proof-formulas}

The proof is a routine recursive argument on the structure of $\phi$.
The statements for existential quantifiers and disjunctions hold by
definition. Up to re-writing
\begin{align}
  \forall{\bf x}\phi &\equiv \lnot\exists{\bf x}(\lnot\phi), \\
  \land_{i=1}^k\phi_i &\equiv \lnot\lor_{i=1}^k(\lnot\phi_i),
\end{align}
everything else follows from the statement for negations
$\phi=\lnot\phi'$. This is proved by constructing a cell decomposition
of $I^n$ compatible with the set $X$ defined by $\phi'$, and taking
the union of all cells disjoint from $X$ in this decomposition.

\subsection{Proof of Theorem~\ref{thm:bound}}
\label{sec:proof-bound}

We first establish an effective result on the existence of
triangulations in the restricted sub-Pfaffian class. Recall that a
\emph{finite simplicial complex} in $\R^n$ is a finite collection
$K=\{\bar\sigma_1,\ldots,\bar\sigma_p\}$ of (closed) simplices
$\bar\sigma_i\subset\R^n$ such that the intersection of any pairs
$\bar\sigma_i\cap\bar\sigma_j$, if not empty, is a common face of
$\bar\sigma_i$ and $\bar\sigma_j$; and such that any face of any
$\bar\sigma_i$ also belongs to $K$. We write $|K|$ for the union of
all simplices in $K$.

\begin{Thm}\label{thm:triangulation}
  Let $Y\subset[0,1]^n$ be a closed sub-Pfaffian set, and
  $X_1,\ldots,X_k\subset Y$ be sub-Pfaffian subsets, and suppose all
  these sets have *-format bounded by $\cF$ and *-degree bounded by
  $D$.

  Then there exists a finite simplicial complex $K$ with vertices in
  $\Q^n$ and a definable homeomorphism $\Phi:|K|\to Y$ such that each
  $X_i$ is a union of images by $\Phi$ of open simplices of
  $K$. Moreover $K$ contains $\poly_\cF(D)$ simplices, and $\Phi$ has
  *-format $\const(\cF)$ and *-degree $\poly_\cF(D)$.
\end{Thm}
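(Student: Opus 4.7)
The plan is to deduce the triangulation theorem from Theorem~\ref{thm:main} by the classical prism construction of o-minimal triangulation, tracking *-format and *-degree through the recursion. First, apply Theorem~\ref{thm:main} to obtain a cylindrical cell decomposition $\cD$ of $I^n$ compatible with $Y$ and each $X_i$, having $\poly_\cF(D)$ cells of *-format $\const(\cF)$ and *-degree $\poly_\cF(D)$; then refine (via routine further applications of Theorem~\ref{thm:main} and Fact~\ref{fact:closure}) so that the continuous defining functions of each cell extend continuously to the closure of its base cell. The triangulation is then built by induction on $n$.

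For $n=1$, the cells of $\cD$ are isolated points and open intervals; put a rational vertex in a separate copy of $\R$ for each cell endpoint and each isolated point, and define $\Phi$ by piecewise-linear interpolation to the actual (possibly irrational) cell points. For $n\ge 2$, apply the induction hypothesis at dimension $n-1$ to the projected decomposition $\pi_{n-1}(\cD)$ to obtain $\Phi_0:|K_0|\to I^{n-1}$ with $K_0\subset\Q^{n-1}$ and the required bounds. Choose rationals $0=q_0<q_1<\cdots<q_M=1$, with $M=\poly_\cF(D)$ dominating the maximal number of cells stacked over any base cell. For each open simplex $\sigma$ of $K_0$, let $f_1<\cdots<f_m$ be the continuous defining functions of the graph cells of $\cD$ over $\Phi_0(\bar\sigma)$, introduce new vertices $(v,q_i)\in\Q^n$ for each vertex $v$ of $\bar\sigma$ and each level $i$, triangulate each prism $\bar\sigma\times[q_i,q_{i+1}]$ by the standard simplex-times-interval subdivision, and define $\Phi$ on a prism simplex with vertices $(v_0,q_i),\ldots,(v_k,q_i),(v_k,q_{i+1}),\ldots,(v_d,q_{i+1})$ by
\begin{equation}
\Phi(\lambda)=\bigl(\Phi_0(y),\,(1-\alpha(\lambda))\,f_i(\Phi_0(y))+\alpha(\lambda)\,f_{i+1}(\Phi_0(y))\bigr),
\end{equation}
where $y\in\bar\sigma$ is the $(n-1)$-coordinate projection of $\sum_j\lambda_j p_j$ and $\alpha(\lambda)\in[0,1]$ is the sum of barycentric weights at the ``top'' vertices; the graph cells are then recovered as common faces of adjacent prism layers.

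The *-format and *-degree bounds follow from Theorem~\ref{thm:formulas}, since on each simplex the graph of $\Phi$ is cut out by a first-order formula combining $\Phi_0$, the $f_i$, and rational affine functions of the barycentric coordinates, all of bounded complexity; the total simplex count multiplies by $\poly_\cF(D)$ per induction step and stays $\poly_\cF(D)$ overall. Compatibility with $Y$ and each $X_i$ is automatic since each constructed simplex lies within a single cell of $\cD$. The principal obstacle is verifying that $\Phi$ is a genuine homeomorphism and that the prism triangulations glue consistently across common faces of adjacent base simplices; this is the standard o-minimal check, relying on the strict monotonicity $f_i<f_{i+1}$ on open base cells and on the uniform (independent of $v$) choice of rationals $q_i$. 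The preliminary refinement ensuring continuous extension of cell-defining functions to base-cell closures is a secondary technical point, costing only a polynomial factor in degree via Theorem~\ref{thm:main} and Fact~\ref{fact:closure}.
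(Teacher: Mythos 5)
Your proposal is correct and follows essentially the same route as the paper: the paper simply invokes the standard o-minimal triangulation proof (in Coste's presentation, built on cylindrical decomposition and the prism construction you describe) and observes that the triangulating map is first-order definable from the cell data, so that Theorems~\ref{thm:main} and~\ref{thm:formulas} yield the $\const(\cF)$ and $\poly_\cF(D)$ bounds. The gluing and boundary-extension issues you flag are exactly the details the paper also leaves as an exercise.
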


A proof of the triangulation theorem in the o-minimal setting can be
found in \cite{vdd}, where it follows a similar proof for the
semialgebraic class in \cite{df:homology}. For convenience we refer
the reader to the alternative presentation given in
\cite[Theorem~4.4]{coste:o-minimality}, which also establishes
Theorem~\ref{thm:triangulation} without the effective estimates for
arbitrary o-minimal structures. Deriving the effective estimates from
this proof in the restricted sub-Pfaffian context is a routine
exercise in the application of Theorems \ref{thm:main} and
\ref{thm:formulas}: one only nees to verify that in the proof of
\cite[Theorem~4.4]{coste:o-minimality}, the triangulating map $\Phi$
is indeed defined by a first-order formula $\phi$ with
$\cF(\phi)=\const(\cF)$ and $D(\phi)=\poly_\cF(D)$. As this
verification is entirely straightforward from the presentation of
loc. cit., we leave the details as an exercise for the reader.

To deduce Theorem~\ref{thm:bound} we first apply
Theorem~\ref{thm:triangulation} with $Y=[0,1]^n$ and $X$ the given
sub-Pfaffian set. We obtain a homeomorphism $\Phi:|K|\to [0,1]^n$.  In
particular, the sum of the Betti numbers of $X$ is equal to that of
$\Phi^{-1}(X)$, which is a union of at most $\poly_\cF(D)$
simplices. This set being semialgebraic, the bound on the sum of Betti
numbers now follows, e.g., from \cite[Theorem~1]{gv:dcg}.

\bibliographystyle{plain} \bibliography{nrefs}

\end{document}